\newcommand{\sub}{\subseteq}
\newcommand{\bra}[1]{\left(#1\right)}
\newtheorem{theorem}{Theorem}
\newtheorem{lemma}[theorem]{Lemma}
\newtheorem{conjecture}[theorem]{Conjecture}
\newtheorem{proposition}[theorem]{Proposition}
\newtheorem{definition}[theorem]{Definition}
\newtheorem{remark}[theorem]{Remark}
\numberwithin{theorem}{section}
\newcommand{\F}{\mathbb{F}}
\begin{document}
\renewcommand{\imageat}{\includegraphics[width=3mm]{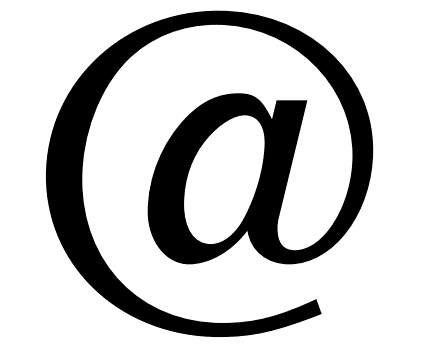}}
\renewcommand{\imagedot}{\includegraphics[width=1mm]{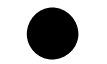}}

\begin{frontmatter}[classification=text]
%% EDITOR: this will force the keywords to appear right after the Abstract.
%%   If the abstract is too long and would force the keywords off the
%%   front page, please comment out % [classification=text] above
%%   This way the keywords will be floated on the bottom of the first page
%%   even though the Abstract spills over to the next page.

%%% AUTHOR: Title goes here.  This line is optional.  You must use it
%%   if title has footnote attached or requires nontrivial typesetting,
%%   e.g., inclusion of linebreaks to force nice layout.
%\title{Short Proof of R\"odl's $n^{\log\log n}$ Bound\footnote{This is a footnote to the title}} %% please capitalize all significant words

%%% AUTHOR:
%%% List all authors. If you wish, place grant acknowledgements in \thanks.
%%% In brackets include a short tag for each author.
\author[Fred]{Fred Tyrrell\thanks{Supported by Thomas Bloom's Royal Society University Research Fellowship}}

%%% AUTHOR: Abstract goes here
\begin{abstract}
A cap set is a subset of $\mathbb{F}_3^n$ with no solutions to $x+y+z=0$ other than when $x=y=z$. In this paper, we provide a new lower bound on the size of a maximal cap set. Building on a construction of Edel, we use improved computational methods and new theoretical ideas to show that, for large enough $n$, there is always a cap set in $\mathbb{F}_3^n$ of size at least $2.218^n$.
\end{abstract}
\end{frontmatter}

%%% AUTHOR: body of paper starts here
\section{Introduction}
\begin{definition}\label{def:cap}
A \emph{cap set} is a set $A \sub \F_3^n$ with no solutions to $x+y+z=0$ other than when $x=y=z$, or equivalently a set $A$ with no 3 distinct elements in arithmetic progression.
\end{definition}
In this paper, we prove the following result.
\begin{theorem} \label{main}
There is a cap set in $\mathbb{F}_3^{56232}$ of size 
\[\binom{11}{7}^{141} \cdot 6^{572} \cdot 12^{572} \cdot 112^{8800} \cdot 37 \cdot 142\]
and hence, for large $n$, there is a cap set $A \sub \F_3^n$ of size $(2.218021\ldots)^n$.
\end{theorem}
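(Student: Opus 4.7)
The plan is to build the cap by an iterated substitution construction in the style of Calderbank--Fishburn and Edel. In broad outline, one starts with a moderate base cap $A \sub \F_3^M$, classifies its points into a few \emph{types} according to how they lie on affine lines of $A$, and then attaches to each type a \emph{fibre} set $C_t \sub \F_3^m$ in fresh coordinates. Under suitable compatibility conditions between the type classification and the fibres---conditions weaker than requiring each $C_t$ to be a cap on its own---the assembly
\[
\bigsqcup_{t}\{(a,c) : a \in A_t,\ c \in C_t\} \sub \F_3^{M+m}
\]
is again a cap, of size $\sum_t |A_t|\cdot|C_t|$. Iterating this operation yields product expressions of exactly the shape appearing in the theorem.

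First I would state Edel's substitution lemma in the precise form needed and verify the bookkeeping required to compose layers, so that cap-building becomes a mechanical process of plugging blocks into blocks. The exponents in the theorem indicate how often each block is substituted: the dominant factor $112^{8800}$ corresponds to $8800$ layers of Edel's $112$-cap in $\F_3^6$, accounting for $6 \cdot 8800 = 52800$ coordinates, while $\binom{11}{7}^{141}$, $6^{572}$, $12^{572}$, and the tail factors $37$ and $142$ fit into the remaining $3432$ coordinates via auxiliary blocks at different layers of the recursion. Verifying that the dimensions add up and that each exponent matches the number of occurrences in the appropriate stratum is a direct arithmetic check once the blocks are identified.

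The heart of the proof is twofold. Theoretically, improving Edel's base of $2.2173\ldots$ to $2.2180\ldots$ requires either a refined type classification that permits a larger fibre in some stratum, or a genuinely new building block---likely corresponding to the unusual factors $37$, $142$, and $\binom{11}{7}=330$. Computationally, confirming these blocks and their compatibility demands an exhaustive search in modest but non-trivial ambient dimensions. I expect the main obstacle to be this search: compatibility-preserving fibres are scarce compared to caps, the feasible region is delicate, and the margin above the previous record is narrow, so the search must be organized carefully to be tractable. Once the blocks are in hand, recursive application of the substitution lemma produces the cap of the stated size in $\F_3^{56232}$, and the asymptotic bound $(2.218021\ldots)^n$ follows by the standard tensor-padding argument that extracts the exponential rate $|A|^{1/56232}$ from a single finite-dimensional example.
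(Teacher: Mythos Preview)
Your sketch correctly places the argument in the Calderbank--Fishburn/Edel extended-product framework and correctly anticipates that new computer-found blocks are needed, but it misses the specific theoretical ingredient that takes the bound from $2.217981$ to $2.218021$. Simply locating a better admissible set by search---the paper's $I(11,7)$---and plugging it into Edel's recipe yields only the paper's intermediate Theorem~\ref{thm:396} in dimension $396$ with rate $2.217981$. The extra step is a \emph{meta}-construction: the paper lifts the extended-product idea one level up, defining \emph{meta-extendable} collections of admissible sets and proving that applying a (meta-)admissible set to such a collection produces a new admissible set. Concretely, one takes $S_1=S_2=I(11,7)$ together with a computer-found $S_0\subset I(11,3)$ of size $37$, checks that $(S_0,S_1,S_2)$ is meta-extendable, and applies $T=\tilde I(142,141)$ to obtain an admissible set $T'\subset\{0,1,2\}^{1562}$ of size $142\cdot 37\cdot\binom{11}{7}^{141}$, far beyond anything a direct search could find. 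It is this $T'$, applied after one pass of $\tilde I(6,5)$ to the standard $6$-dimensional extendable triple $(A_0,A_1,A_2)$ with $|A_0|=12$ and $|A_1|=|A_2|=112$, that gives the cap in $\F_3^{56232}$. Your outline does not contain this second-level idea, and without it the stated size is unreachable.

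Your dimensional accounting is also off in a way that reflects this gap. The exponent $8800$ on $112$ is not ``$8800$ layers'': it equals $5\cdot 572 + 6\cdot 990$, the number of $6$-dimensional sub-blocks carrying $A_1$ or $A_2$ across the $1562$ blocks of $\F_3^{36}$ (of which $572$ carry $B=\tilde I(6,5)(A_0,A_1,A_2)$ and $990$ carry $A_1^6$ or $A_2^6$). The factor $12^{572}$ indeed lives in the remaining $572\cdot 6=3432$ coordinates, but $142$, $37$, $\binom{11}{7}^{141}$, and $6^{572}$ occupy no coordinates at all: the first three constitute $|T'|$, the number of product pieces in the union, and $6^{572}$ is $|\tilde I(6,5)|^{572}$, another union multiplicity. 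Trying to ``fit'' these factors into coordinates as auxiliary fibre caps would not work; they arise from the sizes of admissible sets, and the large one among them comes precisely from the meta-extendable construction you have not described.
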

Since \cite{Pellegrino} in 1970, there have been two improvements to the lower bound on the size of a maximal cap set. A lower bound of $(2.210147\ldots)^n$ was given by Calderbank and Fishburn \cite{CalderbankFishburn}, and then Edel improved this to $(2.217389\ldots)^n$ in \cite{Edel}. In this paper, we obtain the first new lower bound for nearly two decades, and prove that a maximal cap set has size at least $(2.218021\ldots)^n$. Our method is based on that of Edel, which in turn is based on the work of Calderbank and Fishburn, and involves taking cap sets which are known to be maximal in a low dimension and then combining them carefully to produce large cap sets in higher dimensions.
\bigbreak
The upper bound has received significant attention, both historically and recently. In \cite{Meshulam}, an upper bound of $\frac{3^n}{n}$ was shown, which was then improved to $\frac{3^n}{n^{1+\varepsilon}}$ for some $\varepsilon > 0$ in \cite{BK}.
It was an open problem for over 20 years as to whether a cap set in $\mathbb{F}_3^n$ has size at most $c^n$, for some $c<3$. This was finally solved by Ellenberg and Gijswijt in \cite{EllenbergGijswijt}, who used the polynomial method developed by Croot, Lev and Pach in \cite{CLP} to show that a cap set in $\mathbb{F}_3^n$ has size at most $(2.7552\ldots)^n$.
A symmetric version of the polynomial method proof for the upper bound has since been formulated by Tao in \cite{tao_2016}, and Ellenberg and Gijswijt's result was formalised in the Lean theorem prover in \cite{dahmen2019formalizing}. There is also an improvement of the upper bound by an extra factor of $\sqrt{n}$ in \cite{jiang2021improved}.

\bigbreak
One major reason for the interest in cap sets is that they can provide useful insights into similar problems in more complicated sets. For example, the recent breakthrough on Roth's theorem due to Kelley and Meka \cite{kelley2023strong} and the previous work of Bloom and Sisask \cite{bloom2020} make use of several ideas from the upper bound for the cap set problem given by Bateman and Katz in \cite{BK}. An alternative exposition of the Kelley-Meka bound, written for those with a background in additive combinatorics, is given in \cite{bloom2023kelley}. One may therefore think of the cap set problem as a `finite field model' of the question of finding the largest progression-free set of $\{1, \ldots, N\}$.
\smallbreak
Cap sets are also of interest in finite geometry, design of experiments and various other problems in combinatorics and number theory. For an excellent survey on the motivation and background of the problem, and its application to several other interesting questions, we recommend the article \cite{grochow2019new}.
\subsection*{Structure of the paper}
In section 2, we present the extended product construction of Edel, and combine it with improved computational techniques to obtain some new lower bounds. We then introduce a new construction in section 3, which takes the extended product from section 2 up a level, and use this to achieve the best lower bound in this paper. In section 4, we discuss possible ideas for future work, as well as the limitations of our approach. Finally, we describe our computational methods, which make use of a SAT solver, in section 5.

\section{The Extended Product Construction}
In this section, we describe a construction due to Edel in \cite{Edel}, which extends the ideas from \cite{CalderbankFishburn} to produce larger cap sets. We use Edel's method to construct a cap set in 396 dimensions which gives a lower bound of $(2.217981\ldots)^n$, already an improvement on Edel's lower bound of $(2.217389\ldots)^n$. We first describe a rather simple construction.
\begin{proposition}[Product Caps] \label{def:ProdCap}
Let $A \sub \mathbb{F}_3^n$, $B \sub \mathbb{F}_3^m$ be cap sets. Then, by taking a direct product of $A$ and $B$, there is a cap set of size $|A||B|$ in $\mathbb{F}_3^{n+m}$.
\end{proposition}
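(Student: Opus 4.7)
The plan is to take $C = A \times B \sub \F_3^{n+m}$, where by direct product I mean the set of concatenations $\{(a,b) : a \in A, b \in B\}$, and verify that $C$ itself satisfies the defining property of Definition \ref{def:cap}. Cardinality is then immediate: since distinct pairs $(a,b)$ clearly give distinct concatenations, $|C| = |A||B|$.

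The only substantive step is to rule out nontrivial three-term progressions in $C$. I take any three elements $(a_i, b_i) \in C$ for $i = 1, 2, 3$ with
\[(a_1, b_1) + (a_2, b_2) + (a_3, b_3) = 0 \in \F_3^{n+m},\]
and use the fact that addition in $\F_3^{n+m}$ decomposes coordinate-wise into addition in $\F_3^n$ on the first $n$ coordinates and addition in $\F_3^m$ on the last $m$. This gives simultaneously $a_1 + a_2 + a_3 = 0$ in $\F_3^n$ and $b_1 + b_2 + b_3 = 0$ in $\F_3^m$. The cap set hypothesis on $A$ forces $a_1 = a_2 = a_3$, and the cap set hypothesis on $B$ forces $b_1 = b_2 = b_3$, so the three triples in $C$ coincide, as required.

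There is essentially no obstacle here: the argument is really just the observation that the cap set condition is defined by linear equations that respect the product decomposition $\F_3^{n+m} = \F_3^n \oplus \F_3^m$, so cap sets are closed under direct products. The reason this simple bound is worth recording is that $|A||B|$ is the baseline that all of the more elaborate constructions in Section 2 and Section 3 must beat in order to produce genuinely new lower bounds on the cap set growth rate.
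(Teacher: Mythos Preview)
Your proof is correct and essentially identical to the paper's own: both define $C = A \times B$, note the cardinality, take a solution to $x+y+z=0$ in $C$, split it coordinate-wise into the $A$- and $B$-components, and apply the cap set hypothesis on each factor to conclude $x=y=z$. There is nothing to add.
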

\begin{proof}
Define $A \times B = \{(a,b) : a \in A, b \in B\}$. Clearly $|A \times B| = |A||B|$, and we show that $A \times B$ is a cap set.
\smallbreak
Assume we have a solution to $x+y+z = 0$ in $A \times B$. So $x = (x_a, x_b)$, $y = (y_a, y_b)$ and $z = (z_a, z_b)$ where $x_a, y_a, z_a \in A$ and $x_b, y_b, z_b \in B$. Therefore, $x_a + y_a + z_a = 0 = x_b + y_b + z_b$. Since $A,B$ are both cap sets, we must have $x_a = y_a = z_a$ and $x_b = y_b = z_b$, so $x=y=z$. Hence $A \times B$ has no non-trivial solutions to $x+y+z = 0$, and is therefore a cap set.
\end{proof}
\smallbreak
The above proposition shows that there is always a cap set in $\F_3^n$ of size $2^n$, by taking direct products of the set $\{0,1\} \sub \F_3$, to form the cap set $\{0,1\}^n \sub \F_3^n$. We now use the direct product construction to show how we can derive an asymptotic lower bound for the cap set problem.
\begin{proposition} \label{prop:lower bound asymptotic}
Let $A \sub \mathbb{F}_3^n$ be a cap set of size $c^n$. Then for any $\varepsilon > 0$, there is an $M$ such that for all $m \geq M$, there is a cap set of size greater than $\left(c - \varepsilon\right)^{m}$ in $\mathbb{F}_3^m$.
\end{proposition}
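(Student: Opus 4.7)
The plan is to use the direct product construction of Proposition \ref{def:ProdCap} to build cap sets in every sufficiently large dimension by stacking copies of the given cap set $A \sub \F_3^n$ and padding out the leftover coordinates with a trivial cap set.

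First I would dispose of the degenerate case: if $c - \varepsilon \leq 1$ then any singleton is a cap set of size $\geq 1 \geq (c - \varepsilon)^m$, so we may assume $c - \varepsilon > 1$ (in particular $c > 1$). Given a target dimension $m$, I would apply the division algorithm to write $m = qn + r$ with $0 \leq r < n$. Iterated application of Proposition \ref{def:ProdCap}, together with the trivial cap set $\{0,1\} \sub \F_3$, then produces a cap set
\[ A^q \times \{0,1\}^r \sub \F_3^{qn + r} = \F_3^m \]
of size $c^{qn} \cdot 2^r$.

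The main step is then to check that $c^{qn} \cdot 2^r > (c - \varepsilon)^m$ for all sufficiently large $m$. Rearranging, this amounts to
\[ \left(\frac{c}{c - \varepsilon}\right)^{qn} > \left(\frac{c - \varepsilon}{2}\right)^r. \]
Since $0 \leq r < n$ is bounded independently of $m$, the right-hand side is bounded above by a constant $K$ depending only on $n, c$ and $\varepsilon$. Since $c / (c - \varepsilon) > 1$, the left-hand side tends to infinity as $q \to \infty$, so eventually exceeds $K$; taking $M$ to be any dimension beyond which this inequality first holds supplies the required bound.

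I do not anticipate any genuine obstacle: the construction is a direct invocation of Proposition \ref{def:ProdCap}, and the only subtle point is verifying that the bounded "tail" of $r < n$ leftover coordinates does not spoil the asymptotic, which is immediate from the rearrangement above. The argument generalises in an obvious way to padding by any other fixed cap set, but $\{0,1\}$ suffices here.
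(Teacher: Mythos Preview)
Your proposal is correct and follows essentially the same route as the paper: write $m = qn + r$ with $0 \le r < n$, take the $q$-fold product $A^q$ via Proposition~\ref{def:ProdCap}, and observe that the bounded remainder $r < n$ cannot affect the asymptotic. The only cosmetic differences are that the paper pads with a single point rather than $\{0,1\}^r$, and phrases the final estimate as $c^{qn} = (c^{1-r/m})^m$ with $c^{1-r/m} \to c$, whereas you rearrange to compare a growing quantity against a bounded one; both arguments are equivalent.
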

\begin{proof}
Let $A \sub \mathbb{F}_3^n$ be a cap set of size $c^n$.
For $m > n$, there is $k$ such that $m = nk + r$, where $0 \leq r<n$. Applying the product construction $k$ times to $A$, we have a cap set in $\mathbb{F}_3^m$ of size $c^{nk} = \left(c^{1-r/m}\right)^m$. 
\smallbreak
Given $\varepsilon>0$, we can always choose a large enough $M$ such that $c^{1-n/M} > c - \varepsilon$. Since $r<n$, for all $m \geq M$ we have $\frac{r}{m}< \frac{n}{M}$, and hence $c^{1-r/m} > c - \varepsilon$. Therefore, we have constructed a cap set in $\mathbb{F}_3^m$ of size greater than $(c - \varepsilon)^m$.
\end{proof}
\begin{remark}
This proposition demonstrates that finding an asymptotic lower bound for the cap set problem amounts to finding a cap set $A \sub \mathbb{F}_3^n$ where $|A|^{1/n}$ is as large as possible. If $A \sub \mathbb{F}_3^n$ and $B \sub \mathbb{F}_3^m$ are cap sets, we can use the previous proposition to achieve a lower bound of $|A|^{1/n}$ and $|B|^{1/m}$ respectively. We know we can form the direct product $A \times B$, but since $|A \times B|^{\frac{1}{n+m}} \leq \text{max}\left(|A|^{\frac{1}{n}},|B|^{\frac{1}{m}}\right)$, the bound from $A \times B$ will never beat the better of the bounds from $A$ and $B$.
\end{remark}
In \cite{Edel}, Edel introduced a new construction based on the work in \cite{CalderbankFishburn}, which one can think of as a sort of twisted product. The idea is simple - if we start with a collection of cap sets, and construct several different direct products, can we take the union of the direct products and still be a cap set? The answer is yes, under certain conditions on the cap sets and the way we combine them.

\begin{definition}[Extendable collection] \label{def:extendable}
Let $A_0, A_1, A_2 \sub \mathbb{F}_{3}^{n}$ be cap sets. We say that this collection of cap sets is \emph{extendable} if the following 2 conditions hold:
\begin{enumerate}
    \item If $x,y\in A_0$ and $z\in A_1\cup A_2$ then $x+y+z \neq 0$.
    \item If $x\in A_0$, $y\in A_1$ and $z\in A_2$ then $x+y+z\neq 0$.
\end{enumerate}
Note that taking $x=y$ in condition (1) shows that $A_0$ is disjoint from $A_1$ and $A_2$.
\end{definition}
\smallbreak

\begin{definition}[Admissible set\footnote{To avoid confusion, we note that our terminology and definitions are not the same as those in \cite{Edel}. In particular, the object which Edel calls a `cap' we call a `cap set', and Edel's `capset' is our `admissible set'.}] \label{def:admissible}
Let $S \sub \{ 0,1,2\}^m$. $S$ is \emph{admissible} if:
\begin{enumerate}
    \item For all distinct $s, s' \in S$, there are coordinates $i$ and $j$ such that $s_i = 0 \neq s_i'$ and $s_j \neq 0 = s_j'$.
    \item For all distinct $s, s', s'' \in S$, there is a coordinate $k$ such that $\{s_k, s_k', s_k''\} = \{0, 1, 2\}$, $\{0, 0, 1\}$  or $\{0, 0, 2\}$.
\end{enumerate}
\end{definition}

\begin{definition}[Extended product construction] \label{def:extended product} As the name suggests, we can extend an extendable collection of cap sets by an admissible set. The construction is as follows:
for $s = (s_1, \ldots, s_m) \in \{0,1,2\}^m$ and $A_0, A_1, A_2 \sub \mathbb{F}_{3}^{n}$, we define \[s(A_0, A_1, A_2) = A_{s_1} \times \cdots \times A_{s_m} \sub \mathbb{F}_{3}^{nm}.\]
If $S \sub \{0,1,2\}^{m}$ is an admissible set, we define \[S(A_0, A_1, A_2) = \bigcup_{s \in S} \ s\left(A_0, A_1, A_2\right) \sub \mathbb{F}_{3}^{nm}.\]
\end{definition}

The following lemma, which although rather different in presentation is essentially Lemma 10 of \cite{Edel}, demonstrates the usefulness of these definitions.

\begin{lemma} \label{lemma:extended product cap} If $(A_0, A_1, A_2)$ is an extendable collection of cap sets in $\mathbb{F}_{3}^{n}$, and $S \sub \{ 0,1,2\}^m$ is an admissible set, then $S(A_0, A_1, A_2)$ is a cap set in $\mathbb{F}_{3}^{nm}$.
\end{lemma}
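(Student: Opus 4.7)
The plan is to show that any solution $x+y+z=0$ inside $S(A_0,A_1,A_2)$ must be trivial, by splitting on how many of the three ``indexing'' elements of $S$ coincide. Concretely, each of $x,y,z$ lies in some set of the form $s(A_0,A_1,A_2)$ for some $s\in S$, so we pick $s,s',s''\in S$ with $x\in s(A_0,A_1,A_2)$, $y\in s'(A_0,A_1,A_2)$, $z\in s''(A_0,A_1,A_2)$. Writing $x=(x_1,\ldots,x_m)$, $y=(y_1,\ldots,y_m)$, $z=(z_1,\ldots,z_m)$ with each $x_i,y_i,z_i\in\F_3^n$, we have $x_i\in A_{s_i}$, $y_i\in A_{s'_i}$, $z_i\in A_{s''_i}$, and coordinatewise $x_i+y_i+z_i=0$. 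The strategy is then to use the admissibility conditions on $S$ to locate a coordinate $k$ at which the three subscripts $s_k,s'_k,s''_k$ force an impossibility via the extendable-collection conditions, unless $s=s'=s''$.

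First I would dispose of the easy case $s=s'=s''$: then $x,y,z$ all lie in the single product $s(A_0,A_1,A_2)=A_{s_1}\times\cdots\times A_{s_m}$, which is a cap set by iterating Proposition~\ref{def:ProdCap}, so the solution must be trivial. Next, suppose exactly two of $s,s',s''$ coincide; by symmetry say $s=s'\neq s''$. Then admissibility condition~(1) applied to $s\neq s''$ gives a coordinate $i$ with $s_i=0$ and $s''_i\in\{1,2\}$. At this coordinate $x_i,y_i\in A_0$ and $z_i\in A_1\cup A_2$, so extendable condition~(1) forces $x_i+y_i+z_i\neq 0$, contradicting the coordinatewise equation.

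Finally, if $s,s',s''$ are pairwise distinct, admissibility condition~(2) yields a coordinate $k$ at which the multiset $\{s_k,s'_k,s''_k\}$ equals $\{0,1,2\}$, $\{0,0,1\}$, or $\{0,0,2\}$. In the first sub-case, at coordinate $k$ we have one of $x_k,y_k,z_k$ in each of $A_0,A_1,A_2$, and extendable condition~(2) prohibits their sum from vanishing. In the other two sub-cases, two of $x_k,y_k,z_k$ lie in $A_0$ and the third lies in $A_1\cup A_2$, and extendable condition~(1) again prohibits the sum from vanishing. Each sub-case contradicts $x+y+z=0$, completing the proof.

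Since the argument is essentially a bookkeeping exercise, there is no real obstacle beyond organising the cases cleanly; the only thing to watch is that the ``two equal'' case really does need the full strength of admissibility~(1), namely the existence of a coordinate where one subscript is $0$ and the other is nonzero, rather than just distinctness, and that the three options $\{0,1,2\}$, $\{0,0,1\}$, $\{0,0,2\}$ in admissibility~(2) precisely match the two clauses of the extendable-collection definition. The lemma then follows without any further ingredients.
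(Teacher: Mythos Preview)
Your proof is correct and follows essentially the same argument as the paper: the same three-case split on how many of the indexing elements $s,s',s''$ coincide, using the product-cap fact in the first case, admissibility~(1) plus extendable~(1) in the second, and admissibility~(2) splitting into extendable~(1) or~(2) in the third. The presentation is slightly more explicit than the paper's, but there is no substantive difference.
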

\begin{proof}
We want to prove that 
\[\bigcup_{s \in S} s(A_0, A_1, A_2) \]
is a cap set, where $s(A_0,A_1, A_2)= A_{s_1} \times \cdots \times A_{s_{m}}$.
\smallbreak
Suppose we have distinct $x,y,z \in S(A_0, A_1, A_2)$ such that $x+y+z=0$. We have 3 cases, depending on where $x,y,z$ come from.
\smallbreak
\textbf{Case 1:} By the direct product construction, we know that each $s(A_0,A_1, A_2)$ is a cap set. So, there are no distinct $x,y,z \in s(A_0,A_1, A_2)$ such that $x+y+z=0$.
\smallbreak
\textbf{Case 2:} Suppose we have $x,y\in s(A_0,A_1, A_2)$ and $z \in s'(A_0,A_1, A_2)$, where $s \neq s'$. So $x=(x_{s_1},\ldots,x_{s_m})$, $y=(y_{s_1},\ldots,y_{s_m})$ and $z=(z_{s_1'},\ldots,z_{s_m'})$. By property (1) of being admissible, there is some coordinate $j$ in which $s_j=0$ and $s_j'\neq 0$. So $x_{s_j}+y_{s_j}+z_{s'_j}=0$ where $x_{s_j},y_{s_j}\in A_0$ and $z_{s'_j} \in A_1 \cup A_2$, contradicting property (1) of extendable. 
\smallbreak
\textbf{Case 3:} Suppose $x,y,z$ come from the distinct vectors $s,s',s''$. By condition (2) of admissible, there is a coordinate $k$ such that $\{s_k, s_k', s_k''\}$ is $\{0, 1, 2\}$, $\{0, 0, 1\}$ or $\{0, 0, 2\}$. If $\{s_k,s_k',s_k''\}=\{0,0,1\}$ or $\{0,0,2\}$, then we have a contradiction of property (1) of extendable, as above. Otherwise, if $\{s_k,s_k',s_k''\}=\{0,1,2\}$, we have a contradiction of property (2) of extendable.
\end{proof}

We will now discuss some important types of admissible sets, which will be used in our later constructions.

\begin{definition}[Recursively admissible set\footnote{Again, note that our terminology is different to Edel's. In \cite{Edel}, these objects are simply called `admissible sets'.}] \label{def:recursive}
$S$ is a \emph{recursively admissible} set if $S$ is an admissible set, $|S| \geq 2$ and for all distinct pairs $s, s' \in S$ at least one of the following holds:
\begin{enumerate}[label=(\roman*)]
    \item There are coordinates $i, j$ such that $\{s_i, s_i'\} = \{0, 1\}$ and $\{s_j, s_j'\} = \{0, 2\}$.
    \item There is a coordinate $k$ such that $s_k = s_k' = 0$.
\end{enumerate}
\end{definition}

Given the name `recursively admissible', the reader may not be too surprised by the flavour of the following lemma.
\begin{lemma} \label{lemma:recursive}
If $(A_0, A_1, A_2)$ is an extendable collection of cap sets, and $S \sub \{0,1,2\}^m$ is a recursively admissible set, then $\left(S\left(A_0, A_1, A_2\right), A_1^m, A_2^m\right)$ is an extendable collection of cap sets.
\end{lemma}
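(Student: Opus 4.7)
The plan is to check that $(S(A_0, A_1, A_2), A_1^m, A_2^m)$ satisfies both the cap-set requirement and the two conditions in Definition \ref{def:extendable}. The cap-set part is immediate: $S(A_0,A_1,A_2)$ is a cap set by Lemma \ref{lemma:extended product cap}, and $A_1^m, A_2^m$ are cap sets by iterating Proposition \ref{def:ProdCap}. A useful preliminary observation is that every $s\in S$ contains at least one zero coordinate: since $|S|\geq 2$ we may pair $s$ with any distinct $s'\in S$, and condition (1) of admissibility (Definition \ref{def:admissible}) then produces an index $i$ with $s_i = 0$.

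With this in hand, condition (2) of extendability is quick. If $x\in s(A_0,A_1,A_2)$ for some $s\in S$, $y\in A_1^m$, $z\in A_2^m$, and $x+y+z=0$, pick any coordinate $i$ with $s_i=0$; the coordinatewise sum at $i$ gives elements $x_i\in A_0$, $y_i\in A_1$, $z_i\in A_2$ with $x_i+y_i+z_i=0$, contradicting condition (2) of the original extendable collection. For condition (1), assume $x\in s(A_0,A_1,A_2)$ and $y\in s'(A_0,A_1,A_2)$ with $z\in A_1^m\cup A_2^m$ and $x+y+z=0$. When $s=s'$, a zero coordinate of $s$ yields two $A_0$-entries and an $A_1\cup A_2$-entry summing to zero, contradicting condition (1) for $(A_0,A_1,A_2)$. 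When $s\neq s'$, invoke the dichotomy in Definition \ref{def:recursive}: if there is a $k$ with $s_k=s_k'=0$, proceed as in the $s=s'$ case. Otherwise clause (i) applies; assuming WLOG $z\in A_1^m$, the coordinate $j$ with $\{s_j,s_j'\}=\{0,2\}$ gives entries from $A_0,A_2,A_1$ summing to zero, contradicting condition (2). The case $z\in A_2^m$ is handled symmetrically using the coordinate $i$ with $\{s_i,s_i'\}=\{0,1\}$.

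No part of the argument is deep; the main task is to carry out the case split without losing track of which entry lives in which $A_\ell$. Indeed, clauses (i) and (ii) of recursive admissibility seem to have been engineered precisely so that each configuration arising from $s\neq s'$ can be reduced to one of the two structural conditions on $(A_0,A_1,A_2)$, with clause (ii) feeding into condition (1) and clause (i) feeding into condition (2) for the appropriate choice of $z\in A_1^m$ or $A_2^m$.
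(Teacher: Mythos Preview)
Your proof is correct and follows essentially the same approach as the paper's own argument: both isolate the observation that every $s\in S$ has a zero coordinate (from $|S|\geq 2$ and admissibility), dispatch condition~(2) using that coordinate, and handle condition~(1) via the same case split on $s=s'$ versus $s\neq s'$ with the recursive-admissibility dichotomy. Your presentation is slightly more streamlined---you treat the $z\in A_1^m$ and $z\in A_2^m$ subcases by a single WLOG rather than the paper's explicit four-way split---but the underlying logic is identical.
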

\begin{proof}
$S(A_0, A_1, A_2)$ is a cap set by \eqref{lemma:extended product cap}, and takes the place of $A_0$ from the definition of extendable. First of all, we show that there are no $x,y\in S(A_0, A_1, A_2)$ and $z\in A_1^m\cup A_2^m$ such that $x+y+z=0$.
\bigbreak
Assume $x,y \in S(A_0, A_1, A_2)$ and $z \in A_1^m \cup A_2^m$. If $x,y \in s(A_0, A_1, A_2)$, then since $|S|\geq 2$, it must be that $s$ has at least one coordinate 0.  Let $s_k= 0$, and let the corresponding blocks of $x,y,z$ be $x_k, y_k, z_k$ respectively. Then $x_k, y_k \in A_0$, $z_k \in A_1 \cup A_2$, so by property (1) of $(A_0, A_1, A_2)$ being extendable, $x_k + y_k + z_k \neq 0$.
\smallbreak
Now assume $x \in s(A_0, A_1, A_2)$ and $y \in s'(A_0, A_1, A_2)$. Since $S$ is recursively admissible, either there is a coordinate $k$ such that $s_k = s'_k = 0$ or there are coordinates $j,k$ such that $\{s_j, s'_j\} = \{0,1\}$ and $\{s_k, s'_k\} = \{0,2\}$.
\smallbreak
In the first case where $s_k = s'_k = 0$, we have $x_k, y_k \in A_0$ and $z_k \in A_1 \cup A_2$, so by the same reasoning as above $x_k + y_k + z_k \neq 0$ by property (1) for extendable.
\smallbreak
In the case where $\{s_j, s'_j\} = \{0,1\}$ and $\{s_k, s'_k\} = \{0,2\}$, either $z_j = z_k = 1$ or $z_j = z_k = 2$. Assume that $z_j = z_k = 2$ and $s_j =1$. Then $x_j \in A_1$, $y_j \in A_0$ and $z_j \in A_2$. By property (2) of extendable, we deduce that $x_j + y_j + z_j \neq 0$. 
\smallbreak
Similarly, if $z_j = z_k = 2$ and $s'_j = 1$, then $x \in A_0$, $y \in A_1$ and $z \in A_2$, so $x_j+y_j+z_j \neq 0$ by property (2) of extendable again. Finally, if $z_j  = z_k = 1$, then either $s_k = 0$ and $s'_k = 2$, or $s_k = 2$ and $s'_k = 0$, and once again we use property (2) of extendable to show that $x_k + y_k + z_k \neq 0$.
\smallbreak
Hence, we can never have $x,y\in S(A_0, A_1, A_2)$ and $z\in A_1^m\cup A_2^m$ such that $x+y+z=0$, so condition (1) of extendable holds.
\bigbreak
Now we want to show that if $x\in S(A_0, A_1, A_2)$, $y\in A_1^m$ and $z\in A_2^m$ then $x+y+z\neq 0$. This is relatively straightforward - by the same reasoning as above, $s$ has a coordinate $k$ where $s_k = 0$. So, $x_k \in A_0$, $y_k \in A_1$ and $z_k \in A_2$. Then, by property (2) of extendable, $x_k + y_k + z_k \neq 0$, so $x + y + z \neq 0$. 

\smallbreak
So we have condition (2) for extendable, and hence $\left(S\left(A_0, A_1, A_2\right), A_1^m, A_2^m\right)$ is an extendable collection of cap sets, as required.
\end{proof}

In addition to recursively admissible sets, we will also be making use of admissible sets where every element has the same weight. By `weight', we mean the number of non-zero entries in a vector. We will also talk about the `support' of a vector, meaning the set of non-zero coordinates.
\begin{definition}[Constant weight admissible sets]
Write $S = I(m,w)$ if $S \sub \{0,1,2\}^{m}$ is an admissible set consisting of $\binom{m}{w}$ vectors, each of weight $w$. If in addition $S$ is recursively admissible, write $S = \tilde{I}(m,w)$.\footnote{Once again, we mention the difference between our notation and that of Edel. In \cite{Edel}, the second parameter is not the weight $w$ of each vector, but the number of zeroes $t$. Note that $t = m -w$, and so $\binom{m}{w}$ = $\binom{m}{t}$. It is also worth pointing out that the meanings of $I$ and $\tilde{I}$ are swapped in \cite{Edel} - whereas we use $\tilde{I}$ to denote a stronger property than $I$, Edel uses $\tilde{I}$ to denote the weaker property of being an admissible set (which is referred to there as a capset), and uses $I$ for a recursively admissible set (which Edel simply calls an admissible set).}
\end{definition}

\smallbreak
The good thing about this class of admissible sets is that they satisfy the pairwise condition for admissible automatically. There are $\binom{m}{w}$ different ways to choose $w$ of the $m$ coordinates to be non-zero, and any 2 distinct vectors $x,y$ then necessarily have coordinates $i,j$ such that $x_i = 0 \neq y_i$ and $x_j \neq 0 = y_j$. This will turn out to be helpful when we need to find admissible sets later, as we can start with the $\binom{m}{w}$ different support sets, which immediately gives us the first condition for admissible, and then we can view the second condition as a 2-colouring problem on the non-zero entries of each vector.
\smallbreak
Another reason this type of admissible set is useful to work with is that it makes calculating the size of the extended product cap sets relatively simple.
\begin{lemma} \label{lemma:size}
If we extend a collection of cap sets $(A_0, A_1, A_2)$ by $S = I(m,w) \sub \{ 0,1,2\}^m$, where $|A_1| = |A_2|$, then \[|S(A_0, A_1, A_2)| = \binom{m}{w}|A_0|^{m-w} |A_1|^{w}  .\]
\end{lemma}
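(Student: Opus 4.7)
The plan is to compute the size by first evaluating each individual $s(A_0, A_1, A_2)$ and then showing that the union is disjoint, so the total size is just the sum.

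First I would note that, by definition, each $s(A_0, A_1, A_2)$ is the direct product $A_{s_1} \times \cdots \times A_{s_m}$, so its size equals $\prod_{i=1}^{m} |A_{s_i}|$. Since $s$ has weight $w$, exactly $m-w$ of the coordinates contribute a factor of $|A_0|$ and the remaining $w$ coordinates contribute either $|A_1|$ or $|A_2|$. The hypothesis $|A_1|=|A_2|$ is precisely what lets us collapse these into the clean expression $|A_0|^{m-w}|A_1|^{w}$, independent of which non-zero symbols appear in $s$.

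The second step is to show the sets $\{s(A_0, A_1, A_2) : s \in S\}$ are pairwise disjoint. Here I would use the observation already made in the paper following the definition of $I(m,w)$: since $|S| = \binom{m}{w}$ and every element has weight $w$, and there are only $\binom{m}{w}$ possible weight-$w$ supports, distinct elements of $S$ must have distinct supports. Thus, given distinct $s, s' \in S$, there is a coordinate $i$ with $s_i = 0$ and $s_i' \neq 0$. Any element of $s(A_0,A_1,A_2)$ has its $i$-th block in $A_0$, while any element of $s'(A_0,A_1,A_2)$ has its $i$-th block in $A_1 \cup A_2$. The extendable hypothesis on $(A_0,A_1,A_2)$ implies $A_0 \cap (A_1 \cup A_2) = \emptyset$ (as noted in Definition \ref{def:extendable}), so $s(A_0,A_1,A_2)$ and $s'(A_0,A_1,A_2)$ are disjoint.

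Combining these two steps, $|S(A_0,A_1,A_2)| = \sum_{s \in S} |s(A_0,A_1,A_2)| = \binom{m}{w} \cdot |A_0|^{m-w}|A_1|^{w}$, which is the claim. There is no real obstacle in this proof; the only subtle point is invoking the extendable hypothesis to get disjointness of $A_0$ from $A_1 \cup A_2$, and recognising that the distinct-support property of $I(m,w)$ is what converts admissibility condition (1) into the disjointness of the pieces $s(A_0,A_1,A_2)$.
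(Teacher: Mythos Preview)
Your proof is correct and follows essentially the same route as the paper: compute the size of each $s(A_0,A_1,A_2)$ using the constant-weight hypothesis together with $|A_1|=|A_2|$, then sum over the $\binom{m}{w}$ elements of $S$ using disjointness. In fact you supply more detail than the paper, which simply asserts that the $s(A_0,A_1,A_2)$ are pairwise disjoint; your argument via distinct supports and $A_0\cap(A_1\cup A_2)=\emptyset$ from the extendable hypothesis is exactly the justification that is implicitly being used.
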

\begin{proof}
Let $s \in S$. If $s_i = 1$ or $s_i = 2$, then $|A_{s_i}| = |A_1|$, and if $s_i = 0$ then $|A_{s_i}| = |A_0|$. Recall that we defined $s(A_0, A_1, A_2) = A_{s_1} \times \cdots \times A_{s_m}$. Since there are $m-w$ zero coordinates and $w$ non-zero coordinates in $s$, a simple counting argument gives $|s(A_0, A_1, A_2)| = |A_0|^{m-w} |A_1|^{w}$. Then, as $S(A_0, A_1, A_2) = \bigcup\limits_{s \in S} s(A_0, A_1, A_2)$, the $s(A_0, A_1, A_2)$ are all disjoint and $|S| = \binom{m}{w}$, it follows that $|S(A_0, A_1, A_2)| = \binom{m}{w}|A_0|^{m-w} |A_1|^{w}$.
\end{proof}

It is finally time for some examples of admissible sets. For our first example, we prove the existence of an important family of recursively admissible sets, by a relatively simple construction due to Edel. We then use a computer search to produce particular examples of admissible sets.

\begin{lemma} \label{lemma:m-1}
For any $m \geq 2$, there exists a recursively admissible set $\tilde{I}(m,m-1)$.
\end{lemma}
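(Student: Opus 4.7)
The plan is to construct $\tilde{I}(m,m-1)$ explicitly. Since each vector must have weight $m-1$, it has exactly one zero coordinate, and since $|S| = \binom{m}{m-1} = m$ there is exactly one vector for each choice of zero position. For $k \in \{1,\ldots,m\}$, I will define $s^{(k)} \in \{0,1,2\}^m$ by
\[
s^{(k)}_l = \begin{cases} 0 & \text{if } l = k,\\ 1 & \text{if } l < k,\\ 2 & \text{if } l > k,\end{cases}
\]
and set $S = \{s^{(1)}, \ldots, s^{(m)}\}$. Intuitively, the values are chosen antisymmetrically about the unique zero so that the two vectors in any pair disagree in a controlled way.

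First I would verify property (1) of an admissible set: for $k < l$ the coordinate $k$ gives $s^{(k)}_k = 0 \neq s^{(l)}_k$ and the coordinate $l$ gives $s^{(l)}_l = 0 \neq s^{(k)}_l$, so the required coordinates exist. As noted in the paper, this actually follows automatically from the constant-weight structure, but the explicit check is immediate.

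Next I would verify property (2) for any three distinct indices $i < j < k$. Look at coordinate $j$: we get $s^{(i)}_j = 2$, $s^{(j)}_j = 0$, $s^{(k)}_j = 1$, so $\{s^{(i)}_j, s^{(j)}_j, s^{(k)}_j\} = \{0,1,2\}$, giving the desired multiset. This is the key observation: the middle index of any triple supplies the $\{0,1,2\}$ pattern.

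Finally, for the recursively admissible condition, take any pair $s^{(k)}, s^{(l)}$ with $k < l$. They share no zero coordinate (each has a single zero, in different positions), so condition (ii) cannot hold and we must verify (i). At coordinate $k$ we have $\{s^{(k)}_k, s^{(l)}_k\} = \{0,1\}$, and at coordinate $l$ we have $\{s^{(k)}_l, s^{(l)}_l\} = \{2,0\}$, so condition (i) holds with $i = k$, $j = l$. The main subtlety of the problem is precisely this: weight $m-1$ leaves no room for a shared zero, which forces the antisymmetric assignment of $1$'s and $2$'s, but once we notice this the construction essentially writes itself. No serious obstacle arises.
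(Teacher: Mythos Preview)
Your construction and verification are correct and are essentially identical to the paper's own proof: the paper defines the same set $S$ (entries before the zero equal to $1$, entries after equal to $2$) and checks the admissible and recursively admissible conditions via the same coordinate choices. The only cosmetic difference is the order in which the conditions are verified.
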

This is a special case of Lemma 13 in \cite{Edel}, when $c=2$.
\begin{proof}
Construct a set $S$ as follows: consider the $m$ vectors who have exactly one coordinate $0$, and all others non-zero. For each vector, let all entries before the $0$ be $1$, and all entries after the $0$ be $2$. We show S is a recursively admissible set.
\smallbreak
Let $x,y$ be distinct elements of $S$. Then they must have zeroes in different coordinates, so the pairwise condition for admissible holds: there are $i,j$ such that $x_i = 0 \neq y_i$ and $x_j \neq 0 = y_j$. 
\smallbreak
Without loss of generality, let $i<j$. Then $y_i = 1$ and $x_j = 2$ by our construction, so we also have the condition for recursively admissible. That is, there exist $i,j$ such that $\{x_i, y_i\} = \{0,1\}$ and $\{x_j, y_j\} = \{0,2\}$.
\smallbreak
Let $x,y,z$ be distinct elements of $S$, with zero in coordinate $i,j,k$ respectively. Without loss of generality, let $i<k<j$. Then $x_k = 2$, $y_k = 1$, $z_k = 0$, so we have a coordinate $k$ such that $\{x_k, y_k, z_k\} = \{0,1,2\}$, which gives the triples condition for admissible.
\end{proof}

\begin{lemma} \label{computer caps}
There exist admissible sets $I(11,7)$, $I(11,6)$ and $I(10,6)$.
\end{lemma}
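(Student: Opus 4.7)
The plan is to recast the problem as a constrained labeling problem on supports and then discharge it computationally. Fix $(m,w) \in \{(11,7), (11,6), (10,6)\}$. Since the target admissible set has exactly $\binom{m}{w}$ elements of weight $w$, I would insist that each weight-$w$ support $T \subseteq \{1,\dots,m\}$ appears as the support of exactly one vector. As noted in the discussion just before Lemma \ref{lemma:size}, this choice makes the pairwise condition (1) of Definition \ref{def:admissible} automatic: for distinct $T, T'$ there is an element of $T \setminus T'$ and an element of $T' \setminus T$. What remains is to choose, for each pair $(T,i)$ with $i \in T$, a label in $\{1,2\}$ so that the triples condition (2) holds.

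To set up the SAT instance, I would analyze the triples condition coordinate by coordinate. For a triple of distinct supports $T, T', T''$ and a coordinate $k$, the pattern $\{s_k, s'_k, s''_k\}$ of entries falls into one of four types: (a) $k$ lies in none of the three supports, giving $\{0,0,0\}$, useless; (b) $k$ lies in exactly one of them, giving $\{0,0,a\}$ for some $a \in \{1,2\}$, which is always good; (c) $k$ lies in exactly two of them, giving $\{0,a,b\}$ with $a,b \in \{1,2\}$, which is good iff $a \neq b$; (d) $k$ lies in all three, giving a multiset from $\{1,2\}$ only, which is never good. Consequently, a triple is \emph{pre-satisfied} (independently of the labeling) exactly when there is some coordinate contained in exactly one of $T,T',T''$; otherwise the triple imposes the clause that \emph{some} coordinate in case (c) must have its two non-zero labels disagree.

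I would then encode this directly in CNF: introduce a Boolean variable $x_{T,i}$ for each support $T$ and each $i \in T$, interpreting \emph{true} as the label $1$ and \emph{false} as the label $2$. Each non-pre-satisfied triple yields one clause which is a disjunction of XORs of pairs $x_{T,k} \oplus x_{T',k}$ over the relevant coordinates $k$; each XOR expands into two ordinary clauses in the standard way. Feeding the result into the SAT solver described in Section 5 should return a satisfying assignment for each of $(11,7)$, $(11,6)$, $(10,6)$, from which the desired set of vectors is read off.

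The main obstacle is not conceptual but computational and organizational. The three parameter pairs give $\binom{11}{7}=330$, $\binom{11}{6}=462$, and $\binom{10}{6}=210$ supports respectively, so the number of candidate triples is already in the millions, and a naive encoding risks swamping the solver. I would therefore prune aggressively by discarding pre-satisfied triples at the encoding stage, exploit the symmetric role of $1$ and $2$ to fix a few labels and break symmetry, and split into structurally similar blocks where possible. The decisive part of the proof, however, is not the search itself but the certificate: once a labeling is produced, it must be verified by a direct independent check that both clauses of Definition \ref{def:admissible} hold for the resulting $\binom{m}{w}$ vectors, and it is this verification, rather than trust in the solver, that establishes the lemma.
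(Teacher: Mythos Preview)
Your proposal is correct and mirrors the paper's own approach almost exactly: fix one vector per weight-$w$ support so that condition (1) is automatic, observe that a triple is automatically satisfied whenever some coordinate lies in exactly one of the three supports, and otherwise encode as a SAT clause the requirement that some coordinate lying in exactly two supports carries distinct nonzero labels, then verify the returned certificate directly. The only substantive addition in the paper is a list of specific, somewhat ad hoc symmetry-breaking constraints (Section~5.2) that were found by experiment to be needed for the solver to terminate on these particular $(m,w)$; your generic mention of symmetry breaking is the right idea but understates how much tuning was required in practice.
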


The admissible sets were found with a computer search, and can be found on the author's webpage at \url{http://fredtyrrell.com/cap-sets}. The computational methods we employed, including the use of a SAT solver, are described in section 5. Several other admissible sets were given by Edel, including the $I(10,5)$ used to find the previous lower bound in \cite{Edel}. The admissible sets mentioned in \cite{Edel} can be found on Edel's webpage \cite{EdelSets}.

\bigbreak
We now describe an example of an extendable collection of cap sets in $\mathbb{F}_3^6$. This is exactly the collection used in \cite{Edel} and \cite{CalderbankFishburn}, presented slightly differently. We will summarise the construction - for more details, see Section 3 of \cite{Edel} or Section 2, Figure 3 of \cite{CalderbankFishburn}.
\begin{lemma} \label{extendable caps}
There is an extendable collection $(A_0, A_1, A_2)$ of cap sets in $\mathbb{F}_3^6$, where $|A_0| = 12$, $|A_1| = |A_2| = 112$.
\end{lemma}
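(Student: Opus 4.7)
The plan is to realize $(A_0, A_1, A_2)$ as the three parallel hyperplane slices of the classical Pellegrino cap in $\F_3^7$ of size $236$. Identifying $\F_3^7 = \F_3^6 \oplus \F_3$ and letting $P \sub \F_3^7$ denote this cap, I would define
\[A_i = \{v \in \F_3^6 : (v, i) \in P\} \qquad (i = 0, 1, 2).\]
By Pellegrino's construction \cite{Pellegrino}, as detailed in \cite{CalderbankFishburn} and \cite{Edel}, the three slices have cardinalities $|A_0| = 12$ and $|A_1| = |A_2| = 112$. Each $A_i$ is immediately a cap set in $\F_3^6$, since any non-trivial solution to $x+y+z=0$ in $A_i$ lifts coordinate-wise to one in the cap $P$.

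It then remains to verify the two extendability conditions, both of which follow almost formally from the slicing structure together with $P$ being a cap. For condition~(1), take $x, y \in A_0$ and $z \in A_1 \cup A_2$: the last coordinates of the lifted triple $(x,0), (y,0), (z,i)$ sum to $1$ or $2$, hence are non-zero in $\F_3$, so the full lifts cannot sum to zero in $\F_3^7$ and in particular $x + y + z \neq 0$ in $\F_3^6$. For condition~(2), take $x \in A_0$, $y \in A_1$, $z \in A_2$; the lifts $(x,0), (y,1), (z,2)$ are three \emph{distinct} elements of $P$ (their last coordinates differ), and if $x + y + z = 0$ in $\F_3^6$ then, because $0 + 1 + 2 = 0$ in $\F_3$, we would obtain a non-trivial $3$-term arithmetic progression in $P$, contradicting that $P$ is a cap set.

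The principal obstacle is therefore not the verification of the extendability conditions, which come essentially for free, but the preliminary step of exhibiting the Pellegrino $236$-cap and establishing that it splits as $12 + 112 + 112$ across a suitable choice of three parallel hyperplanes. I would handle this by reproducing (or citing) the explicit coordinates recorded in \cite{Pellegrino, CalderbankFishburn, Edel}; alternatively one can give an intrinsic description of the Hill $112$-cap in $\F_3^6$ (for instance via the $\F_9$-line model or the ternary Golay code) and identify $A_0$ as a naturally occurring sub-structure fixed by a large subgroup of the stabilizer. This is the step requiring genuine work, since the sizes $12$ and $112$ are arithmetic properties of this specific cap rather than outputs of any general recipe.
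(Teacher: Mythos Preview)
Your argument for condition~(2) is correct and is indeed how the slicing picture makes that condition automatic. However, your verification of condition~(1) contains a genuine logical gap. You write that the lifts $(x,0),(y,0),(z,i)$ have last coordinates summing to $i\in\{1,2\}\neq 0$, so the lifts cannot sum to zero in $\F_3^7$, ``and in particular $x+y+z\neq 0$ in $\F_3^6$''. This last step is a non-sequitur: from $(x+y+z,\,i)\neq 0$ you learn nothing about $x+y+z$, since $i\neq 0$ already forces the conclusion regardless of the first six coordinates. More to the point, because the last coordinates do \emph{not} sum to zero, the triple of lifts is not a candidate line in $P$, and so the cap property of $P$ gives you no information whatsoever in this case. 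Condition~(1) simply does not follow from $P$ being a cap; it is a genuinely extra constraint on how the slices interact.

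The paper handles this by working directly in $\F_3^6$ with an explicit description: $A_0$ is taken to be the set of weight-$1$ vectors, while $A_1$ and $A_2$ consist only of vectors of weight $3$ or $6$ (built from a $(6,3,2)$-design). Condition~(1) is then immediate from weight considerations, since $x+y$ with $x,y\in A_0$ has weight at most $2$ and can never cancel a weight-$3$ or weight-$6$ vector. So the step you flagged as routine (``come essentially for free'') is exactly the one requiring the specific structure of the cap, and the step you flagged as the principal obstacle (the $12+112+112$ split) is, in the paper's presentation, a byproduct of that same explicit construction. If you want to keep the slicing viewpoint, you must still import enough of the explicit description of $P$ to verify condition~(1) separately.
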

\begin{proof}
\textbf{Constructing the collection:}
\smallbreak
Consider the following $6 \times 10$ matrix:
\begin{center}
$\begin{pmatrix}
1 & 1 & 1 & 1 & 1 & 0 & 0 & 0 & 0 & 0\\
1 & 1 & 0 & 0 & 0 & 1 & 1 & 1 & 0 & 0\\
1 & 0 & 1 & 0 & 1 & 0 & 0 & 0 & 1 & 1\\
0 & 1 & 0 & 1 & 0 & 0 & 1 & 0 & 1 & 1\\
0 & 0 & 1 & 0 & 1 & 0 & 1 & 1 & 1 & 0\\
0 & 0 & 0 & 1 & 1 & 1 & 0 & 1 & 0 & 1\\
\end{pmatrix}$    
\end{center}
This is the incidence matrix of a $(6,3,2)$-design, an example of a balanced incomplete block design, meaning any pair of rows are both 1 in exactly two coordinates. Let $D \sub \mathbb{F}_{3}^{6}$ be the vectors with non-zero entries given in the coordinates corresponding to the 1s in the matrix. By this, we mean $D$ is the set of vectors whose non-zero coordinates are 123, 124, 135, 146, 156, 236, 245, 256, 345 or 346. 
\smallbreak Let $D'$ be the remaining vectors of $\mathbb{F}_{3}^{6}$ with three non-zero entries. There are $\binom{6}{3} \times \ 2^3 = 160$ vectors of weight 3, $|D|$ = $2^3 \times 10 = 80$, so $|D| = |D'| = 80$.
\smallbreak Let $R$ be the vectors with no zeros, and an even number of 1s, let $R'$ be the other weight 6 vectors, with an odd number of 1s. $|R| = |R'| = 32$. 
\smallbreak Define $A_1 = D \cup R$, $A_2 = D' \cup R$, and let $A_0$ be the vectors of weight 1. Then $A_0$ is a cap set of size 12 and $A_1, A_2$ are cap sets of size 112 in $\mathbb{F}_{3}^{6}$. Furthermore, $|A_1 \cap A_2|$ = 32 and $A_1 + A_2 = \mathbb{F}_{3}^{6} \setminus A_0$. This all follows from the properties of a block design, and by checking that $D + D'$, $R+R$, $D+R$ and $D' + R$ don't contain any weight 1 vectors.
\bigbreak
\textbf{This collection is extendable:}
\smallbreak
Let $x,y \in A_0$. Since all elements of $A_0$ have weight 1, $x+y$ must have weight $0, 1$ or $2$. If $z \in A_1 \cup A_2$ is such that $x + y + z = 0$, then $z$ needs to have the same weight as $x+y$. Since $A_1 \cup A_2$ consists only of vectors of weights 3 or 6, we cannot have such a $z$. So, there are no solutions to $x+y+z=0$ where $x,y \in A_0$ and $z \in A_1 \cup A_2$, which is condition (1) for extendable.
\smallbreak
Let $x \in A_1$ and $y \in A_2$. Since $A_1 + A_2 = \mathbb{F}_{3}^{6} \setminus A_0$, and $z \in A_0 \iff 2z \in A_0$, there is no $z \in A_0$ such that $x + y + z = 0$. This is condition (2) for extendable, and so we have an extendable collection of cap sets.
\end{proof}

We are now ready to prove the main result of this section, and obtain our first new lower bound for maximal cap sets. We will use several results and examples from this section, to show the following.
\begin{theorem} \label{thm:396}
There exists a cap set $A \sub \mathbb{F}_3^{396}$ of size
\[\binom{11}{7} \cdot 6^4 \cdot 12^4 \cdot 112^{62}.\]
\end{theorem}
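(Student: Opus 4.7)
The plan is to chain together the three main tools built up in this section: the base extendable collection in $\mathbb{F}_3^6$, one application of the recursive step to produce a new extendable collection in $\mathbb{F}_3^{36}$, and then a final extended product with an admissible set of shape $I(11,7)$ to land in dimension $11 \cdot 36 = 396$.

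Concretely, I start with the extendable collection $(A_0, A_1, A_2)$ in $\mathbb{F}_3^{6}$ from Lemma \ref{extendable caps}, where $|A_0|=12$ and $|A_1|=|A_2|=112$. Lemma \ref{lemma:m-1} with $m = 6$ gives a recursively admissible set $\tilde{I}(6,5)$, and Lemma \ref{lemma:recursive} then produces an extendable collection $(B_0, B_1, B_2)$ in $\mathbb{F}_3^{36}$ with
\[B_0 = \tilde{I}(6,5)(A_0, A_1, A_2), \qquad B_1 = A_1^6, \qquad B_2 = A_2^6.\]
Since $|A_1| = |A_2|$, Lemma \ref{lemma:size} applies and yields $|B_0| = \binom{6}{5} \cdot 12 \cdot 112^{5} = 6 \cdot 12 \cdot 112^{5}$, while $|B_1| = |B_2| = 112^{6}$. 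By Lemma \ref{computer caps} there is an admissible set $I(11,7) \sub \{0,1,2\}^{11}$, so Lemma \ref{lemma:extended product cap} says that $I(11,7)(B_0, B_1, B_2)$ is a cap set in $\mathbb{F}_3^{396}$, and Lemma \ref{lemma:size} (valid because $|B_1| = |B_2|$) computes its size as
\[\binom{11}{7} \cdot |B_0|^{4} \cdot |B_1|^{7} = \binom{11}{7} \cdot \bigl(6 \cdot 12 \cdot 112^{5}\bigr)^{4} \cdot \bigl(112^{6}\bigr)^{7} = \binom{11}{7} \cdot 6^{4} \cdot 12^{4} \cdot 112^{62}.\]

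There is no real obstacle beyond choosing the right intermediate recursively admissible set: once the outer $I(11,7)$ is fixed (forced by the factor $\binom{11}{7}$ and the split $11 = 4+7$ in the target), the exponent arithmetic $4 \cdot 1 = 4$ and $4 \cdot 5 + 7 \cdot 6 = 62$ pins down the inner choice to $\tilde{I}(6,5)$, whose existence is immediate from the $\tilde{I}(m,m-1)$ family of Lemma \ref{lemma:m-1}. Everything else is pure bookkeeping via Lemmas \ref{lemma:recursive}, \ref{lemma:extended product cap} and \ref{lemma:size}.
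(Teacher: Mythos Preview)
Your proof is correct and follows exactly the same two-step approach as the paper: first extend the base collection from Lemma~\ref{extendable caps} by $\tilde{I}(6,5)$ (via Lemmas~\ref{lemma:m-1} and~\ref{lemma:recursive}) to get an extendable collection in $\mathbb{F}_3^{36}$, then apply the $I(11,7)$ of Lemma~\ref{computer caps} and compute the size with Lemma~\ref{lemma:size}. The only addition beyond the paper is your closing paragraph explaining how the target size forces the choice of $\tilde{I}(6,5)$, which is a nice piece of reverse-engineering but not part of the proof proper.
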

\begin{proof} First, we take the extendable collection $(A_0, A_1, A_2)$ from \eqref{extendable caps}, and extend it by the recursively admissible set $S = \tilde{I}(6,5)$, which exists by \eqref{lemma:m-1}. This gives a cap set $B \sub \mathbb{F}_{3}^{36}$ by \eqref{lemma:extended product cap}, where $|B| = 6   \times 112^5 \times 12$ by \eqref{lemma:size}, and an extendable collection $(B, A_{1}^6, A_{2}^6)$ by \eqref{lemma:recursive}.
\smallbreak
Then we extend our new collection $(B, A_{1}^6, A_{2}^6)$ by $T = I(11,7)$ from \eqref{computer caps}, to produce a cap set in $\mathbb{F}_3^{396}$, which by \eqref{lemma:size} has size \[\binom{11}{7} \cdot (6 \times 112^5 \times 12)^4 \cdot (112^6)^7 .\]
\end{proof}
\begin{remark} \label{otherbounds}
Since $|A|^{1/396} \approx 2.217981$, we see that our cap in $\mathbb{F}_3^{396}$ gives an exponential improvement on the asymptotic lower bound on the size of a cap set. We note that Edel's lower bound uses essentially the same method, but with the admissible sets $S=\tilde{I}(8,7)$ and $T=I(10,5)$. Using either $I(10,6)$ or $I(11,6)$ from \eqref{computer caps} produces a better lower bound than Edel, but neither is better than our new bound of $\approx 2.217981^n$.
\end{remark}

\section{Extending The Admissible Set Construction}
In this section, we will extend Edel's methods, by mimicking the extended product for cap sets to find large admissible sets. This will allow us to construct large admissible sets, much larger than is computationally feasible, which will be used to obtain a lower bound of $2.218^n$. 
\smallbreak
The following proposition is similar to \eqref{def:ProdCap}, where we showed that the direct product of cap sets is a cap set.
\begin{proposition} \label{prop: ad prod}
If $S, T$ are admissible sets, then so is their direct product $S \times T$.
\end{proposition}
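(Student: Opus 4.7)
The plan is to verify both conditions of \eqref{def:admissible} directly for $S \times T = \{(s, t) : s \in S, t \in T\} \sub \{0,1,2\}^{m + m'}$, where I view each element as the concatenation of its $S$-part (first $m$ coordinates) and its $T$-part (last $m'$ coordinates). The case analysis is driven by which of the two projections (to $S$ or to $T$) already separates the given tuples.

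For condition (1), I would take distinct $(s, t), (s', t') \in S \times T$ and split on whether $s = s'$. If $s \neq s'$, condition (1) for $S$ supplies the required coordinates inside the first $m$ positions. Otherwise $s = s'$ forces $t \neq t'$, and condition (1) for $T$ supplies them inside the last $m'$ positions. This part is routine.

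The substance is in condition (2). Given three distinct triples $(s, t), (s', t'), (s'', t'')$, the easy sub-cases are: if $s, s', s''$ are all distinct, apply condition (2) for $S$ in the first $m$ coordinates; symmetrically if $t, t', t''$ are all distinct. The main obstacle is the case where neither projection has three distinct values. Here the plan is: after relabelling, I may assume $s = s'$, which forces $t \neq t'$. If in addition $s'' = s$, then the three triples can only differ in their $T$-parts, so $t, t', t''$ are pairwise distinct, contradicting the case assumption. Hence $s'' \neq s$, and I apply the pairwise condition (1) for $S$ to the pair $s, s''$ to obtain a coordinate $i$ in the $S$-block with $s_i = 0 \neq s''_i$. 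Since $s'_i = s_i = 0$, the triple $\{s_i, s'_i, s''_i\}$ has shape $\{0, 0, 1\}$ or $\{0, 0, 2\}$, exactly as required.

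The conceptual point I want to record is that condition (2) for $S \times T$ does not reduce purely to condition (2) for the two factors; one must invoke the pairwise condition (1) of one factor to handle the mixed collapse where two of the three triples agree in the $S$-part while a different pair agrees in the $T$-part. Everything else is a straightforward bookkeeping argument on which coordinate block supplies the witness.
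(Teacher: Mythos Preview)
Your proof is correct and follows essentially the same route as the paper's: both handle condition~(1) by projecting to whichever factor separates the pair, and both handle the key mixed sub-case of condition~(2) by invoking the \emph{pairwise} condition~(1) of one factor (applied to $s,s''$ once $s=s'$) to produce a coordinate with pattern $\{0,0,1\}$ or $\{0,0,2\}$. Your treatment of that sub-case is in fact slightly more direct than the paper's, since you deduce $s''\neq s$ straight from the case hypothesis rather than first pinning down which two of $t,t',t''$ coincide.
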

\begin{proof}
\textbf{Pairwise condition:}
\\Let $a,b \in S \times T$ be distinct. Then $a,b$ are of the form $(s,t)$, $(s',t')$ for some $s,s' \in S$ and $t,t' \in T$. If $s,s' \in S$ are distinct, there are coordinates $i,j$ such that $s_i = 0 \neq s_i'$ and $s_j \neq 0 = s_j'$ by condition (1) of admissibility for S. The same is true for $t,t' \in T$, and since $(s,t) \neq (s',t')$, we must have $s \neq s'$ or $t \neq t'$. It follows that for all $a \neq b \in S \times T$, we have coordinates $i,j$ such that $a_i = 0 \neq b_i$ and $a_j \neq 0 = b_j$. So condition (1) for admissibility is satisfied.
\bigbreak
\textbf{Triples condition:}
\\Let $a,b,c \in S \times T$ be distinct. As before, we must have $a = (s,t)$, $b = (s',t')$ and $c = (s'',t'')$ for some (not necessarily distinct) $s,s',s'' \in S$ and $t,t',t'' \in T$. 
    \\ \textbf{Case 1:} If $s,s',s''$ or $t,t',t''$ are all distinct, then condition (2) of admissibility follows immediately by the admissibility of $S$ or $T$. 
    \\ \textbf{Case 2:} Assume that neither $s,s',s''$ nor $t,t',t''$ are all distinct. Without loss of generality, we may assume $s = s'$. Since $(s,t) \neq (s',t')$, we cannot also have $t=t'$, but since $t,t',t''$ are not all distinct, without loss of generality $t=t''$. So, we must have: $a = (s,t)$, $b = (s,t')$ and $c = (s'',t)$. Since $a,b,c$ are distinct, we must have $s \neq s''$. By condition (1) of admissibility of S, there is a coordinate $k$ such that $s_k = 0 \neq s''_k$. So, $\{a_k, b_k, c_k\} = \{0,0,1\}$ or $\{0,0,2\}$, hence condition (2) for admissibility also holds for $S \times T$.
\end{proof}
\begin{remark}
Now we have a direct product construction for admissible sets, there are at least 2 natural questions:
\begin{enumerate}
    \item Does the direct product construction allow us to produce better admissible sets, by combining known admissible sets?
    \item Can we generalise the direct product construction for admissible sets, in an analogous way to the extension construction \eqref{def:extended product} for cap sets?
\end{enumerate}
\end{remark}
The answer to the first question is no: much like the situation for direct products of cap sets, taking a direct product of admissible sets only ever does as well as the best of the individual admissible sets.
\smallbreak
We focus on the second question - can we improve the product construction for admissible sets, in a similar way to the extended product construction for cap sets? We will answer this question in the affirmative. In particular, in this section we will construct an admissible set in $\{0,1,2\}^{1562}$, which we use to prove \eqref{main} and obtain the lower bound of $2.218^n$.

\begin{definition}[Meta-admissible] \label{def:meta ad}
We say a set $T \sub \{0, 1,2\}^r$ is \emph{meta-admissible} if it is admissible, as in \eqref{def:admissible}. Recall that admissible means:
\begin{enumerate}
    \item For all distinct $t, t' \in T$, there are coordinates $i$ and $j$ such that $t_i = 0 \neq t_i'$ and $t_j \neq 0 = t_j'$.
    \item For all $t, t', t'' \in T$ distinct, there is a coordinate $k$ such that $\{t_k, t_k', t_k''\} = \{0, 1, 2\}$, $\{0,0,1\}$ or $\{0, 0, 2\}$.
\end{enumerate}
\end{definition}
\begin{definition}[Meta-extendable] \label{def:meta ext}
A collection $S_0, S_1, S_2 \sub \{0,1,2\}^m$ of admissible sets is said to be \emph{meta-extendable} if:
\begin{enumerate}
    \item For any $s \in S_0$ and $s' \in S_1 \cup S_2$, the weight of $s$ is less than the weight of $s'$, so all the vectors in $S_0$ have more zeroes than any vector in $S_1$ or $S_2$.
    \item If $x,y \in S_0$ and $z \in S_1 \cup S_2$ then there is a coordinate $k$ such that $\{x_k, y_k, z_k\} = \{0, 1, 2\}$, $\{0,0,1\}$ or $\{0, 0, 2\}$.
    \item If $x \in S_0$, $y \in S_1$ and $z \in S_2$, then there is a coordinate $k$ such that $\{x_k, y_k, z_k\} = \{0, 1, 2\}$, $\{0,0,1\}$ or $\{0, 0, 2\}$.
\end{enumerate}
\end{definition}
We have a similar construction to \eqref{def:extended product}, but this time we extend a collection of admissible sets rather than cap sets.
\begin{definition} \label{def:meta con}
If $S_0, S_1, S_2$ are admissible sets, and $T \sub \{0, 1,2\}^r$, for each $t = (t_1, \ldots, t_r) \in T$ we define
\[t(S_0, S_1, S_2) = S_{t_1} \times \cdots \times S_{t_r}.\]
Predictably, we then define \[T(S_0, S_1, S_2) = \bigcup_{t \in T} t\bra{S_0, S_1, S_2} = \bigcup_{t \in T} \bra{S_{t_1} \times \cdots \times S_{t_r}}.\]
\end{definition}
\bigbreak
It will not come as a shock that our new definitions of meta-admissible \eqref{def:meta ad} and meta-extendable \eqref{def:meta ext} allow us to use the extended product construction on admissible sets \eqref{def:meta con} to produce new admissible sets.
\begin{lemma}
If $S_0, S_1, S_2 \sub \{0,1,2\}^m$ is a meta-extendable collection of admissible sets, and $T \sub \{0, 1,2\}^r$ is meta-admissible, then $T(S_0, S_1, S_2)$ is an admissible set.
\end{lemma}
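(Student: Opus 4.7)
The plan is to follow the structure of Lemma~\ref{lemma:extended product cap} (the analogous result for cap sets), splitting cases according to how many distinct $t \in T$ the vectors under consideration come from. Given $x \in T(S_0, S_1, S_2)$, write $x = (x_1, \ldots, x_r)$ with $x_i \in S_{t_i} \sub \{0,1,2\}^m$, so that each global coordinate of $x$ naturally splits into an outer index $k \in \{1, \ldots, r\}$ (a ``$T$-coordinate'') and an inner index $a \in \{1, \ldots, m\}$ (an ``$S$-coordinate''). I then verify the pairwise and triples conditions for $T(S_0, S_1, S_2)$ separately.

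For the pairwise condition, take distinct $x, y \in T(S_0, S_1, S_2)$ coming from $t, t' \in T$. If $t = t'$, then $x$ and $y$ both lie in the direct product $S_{t_1} \times \cdots \times S_{t_r}$, which is admissible by iterating Proposition~\ref{prop: ad prod}, so the required pair of coordinates exists inside that product. If $t \neq t'$, admissibility of $T$ supplies $T$-coordinates $i, j$ with $t_i = 0 \neq t'_i$ and $t_j \neq 0 = t'_j$. In block $i$ we have $x_i \in S_0$ and $y_i \in S_1 \cup S_2$, and meta-extendable condition~(1) forces the weight of $x_i$ to be strictly smaller than that of $y_i$; a pigeonhole argument on the supports produces an inner coordinate in block $i$ where $x_i$ is zero and $y_i$ is nonzero. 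Block $j$ yields the symmetric coordinate the same way.

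For the triples condition, take distinct $x, y, z \in T(S_0, S_1, S_2)$ coming from $t, t', t'' \in T$. If $t = t' = t''$, all three vectors live in a common admissible direct product $S_{t_1} \times \cdots \times S_{t_r}$, and we are done by iterating Proposition~\ref{prop: ad prod} again. If $t, t', t''$ are all distinct, meta-admissibility of $T$ gives a $T$-coordinate $k$ with $\{t_k, t'_k, t''_k\}$ equal to $\{0,1,2\}$, $\{0,0,1\}$, or $\{0,0,2\}$: in the first sub-case meta-extendable~(3) applied to the block-$k$ restrictions of $x, y, z$ (one each in $S_0, S_1, S_2$) produces the inner coordinate, and in the other two sub-cases meta-extendable~(2) does the same (two of the block-$k$ restrictions lie in $S_0$ and one in $S_1 \cup S_2$). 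The remaining case is, up to relabelling, $t = t' \neq t''$: admissibility of $T$ yields a $T$-coordinate $i$ with $t_i = 0 \neq t''_i$, so $x_i, y_i \in S_0$ and $z_i \in S_1 \cup S_2$, and meta-extendable~(2) supplies the required inner coordinate.

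The main obstacle is purely bookkeeping: one must track both the outer ($T$-level) and inner ($S$-level) flavours of ``good triple'' and invoke the appropriate meta-extendable condition at each step, and this multiplies the case analysis compared with Lemma~\ref{lemma:extended product cap}. One subtlety worth flagging is that in the $t = t' \neq t''$ sub-case we may well have $x_i = y_i$ within block $i$; however, meta-extendable~(2) is stated without any distinctness hypothesis on its first two arguments, and when $x_i = y_i$ it degenerates into the weight statement of meta-extendable~(1), so no extra argument is needed there.
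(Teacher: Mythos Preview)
Your proof is correct and follows essentially the same approach as the paper: the case split on how many of $t, t', t''$ coincide, the use of Proposition~\ref{prop: ad prod} for the direct-product case, the weight/pigeonhole argument for pairs, and the invocation of meta-extendable~(2) or~(3) for triples all match the paper's argument step for step. Your remark on the $x_i = y_i$ subtlety in the two-$t$ case is a nice addition that the paper leaves implicit.
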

\begin{proof}
\textbf{First condition for admissible (pairs):}
Let $x,y \in T(S_0, S_1, S_2)$ be distinct. So we know $x \in S_{t_1} \times \cdots \times S_{t_r}$ and $y \in S_{t'_1} \times \cdots \times S_{t'_r}$ for some $t,t' \in T$. There are 2 cases: $t=t'$ or $t \neq t'$.
\smallbreak
If $t=t'$, we know $x,y \in S_{t_1} \times \cdots \times S_{t_r}$, which is admissible by \eqref{prop: ad prod}.
\smallbreak
Assume $t \neq t'$. By condition (1) of meta-admissible, there is a coordinate $i$ such that $t_i = 0 \neq t_i'$, and $j$ such that $t_j \neq 0 = t'_j$. Without loss of generality, assume $t'_i < t_j$. So
\[x \in S_{t_1} \times \cdots \times S_{0} \times \cdots \times S_{t_j} \times \cdots \times S_{t_r}\]
and 
\[y \in S_{t'_1} \times \cdots \times S_{t'_i} \times \cdots \times S_0 \times \cdots \times S_{t'_r}.\] 
Let $s \in S_0$, $s' \in S_{t'_i}$. Since $t'_i \neq 0$, by property (1) of meta-extendable $s$ has lower weight than $s'$. By the pigeonhole principle there must be a coordinate $k$ such that $s_{k} = 0 \neq s'_k$, as $s$ has more zero entries than $s'$, so $s$ must be zero somewhere $s'$ is not. Similarly, for $s'' \in S_0$, $s''' \in S_{t_j}$, since $t_j \neq 0$ there is a coordinate $\ell$ such that $s''_{\ell} = 0 \neq s'''_{\ell}$. So, it follows that there are coordinates $i', j'$ such that $x_{i'} = 0 \neq y_{i'}$ and $x_{j'} \neq 0 = y_{j'}$, so we have the first condition for admissibility in this case too.
\bigbreak
\textbf{Second condition of admissibility (triples):}
Let $x,y,z \in T(S_0, S_1, S_2)$ be distinct. If $x,y,z$ all come from the same $t \in T$ then we are done, by the direct product construction. So, we need to consider the other cases: $x,y$ come from $t$ and $z$ comes from $t'$ or $x,y,z$ come from distinct $t, t', t''$
\smallbreak
If $x,y$ are from $t$ and $z$ is from $t'$ where $t \neq t'$, then by definition of $T$ being meta-admissible, there is a coordinate $i$ such that $t_i = 0 \neq t'_i$. So, the $i$-th blocks $x_i, y_i$ of $x,y$ are from $S_0$, and the $i$-th block $z_i$ of $z$ is in $S_1 \cup  S_2$. Then by condition (2) of meta-extendable applied to $x_i, y_i, z_i$, there is a coordinate $k$ where $\{x_k, y_k, z_k\} = \{0, 1, 2\}$, $\{0,0,1\}$ or $\{0, 0, 2\}$.
\smallbreak
Finally, if $x,y,z$ from distinct $t, t', t''$ then by condition (2) of meta-admissible, we have a coordinate $k$ where either $\{t_k, t'_k, t''_k\} = \{0, 0, 1\}$, $\{0, 0, 2\}$ or $\{0, 1, 2\}$. 
\smallbreak In the first case, two of $x_k$, $y_k$, $z_k$ are in $S_0$ and the other one is from $S_1 \cup S_2$, and we are done as above, with condition (2) of meta-extendable.
\smallbreak If $\{t_k, t'_k, t''_k\} = \{0, 1, 2\}$, then exactly one of $x_k$, $y_k$, $z_k$ is in each of $S_0$, $S_1$, $S_2$. Then by condition 3 of meta-extendable, we are done.
\end{proof}

\smallbreak
\begin{lemma} \label{metacoll}
There is a meta-extendable collection of admissible sets $S_0, S_1, S_2$, where $S_1$ and $S_2$ are both $I(11,7)$ admissible sets, and $S_0 \sub I(11,3)$ has size 37.
\end{lemma}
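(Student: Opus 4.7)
Plan. The proof is computational, following the SAT-based template of Lemma \ref{computer caps}.

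First, I would identify the conditions that come for free. Condition (1) of Definition \ref{def:meta ext} is immediate from the weight disparity $3 < 7$. In condition (2), the subcase $x = y$ requires only a coordinate where $x_k = 0 \neq z_k$, which exists by pigeonhole since $|\mathrm{supp}(z)| = 7 > 3 = |\mathrm{supp}(x)|$. The subcase $y = z$ of condition (3) is the one genuine obstacle to reusing a single $I(11,7)$ for both $S_1$ and $S_2$: it would demand $\mathrm{supp}(x) \not\subseteq \mathrm{supp}(y)$ for every $x \in S_0$ and every $y \in S_1$, but an $I(11,7)$ realises every weight-$7$ support, and every weight-$3$ support sits inside some weight-$7$ support. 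I circumvent this by forcing $S_1 \cap S_2 = \emptyset$: take $S_1$ to be the $I(11,7)$ produced by Lemma \ref{computer caps}, and let $S_2$ be its image under the coordinate-wise swap $1 \leftrightarrow 2$. The swap preserves admissibility (it permutes the multisets $\{0,0,1\}$ and $\{0,0,2\}$ and fixes $\{0,1,2\}$) and fixes no weight-$7$ vector, so $S_2$ is an $I(11,7)$ disjoint from $S_1$. A short check moreover shows that condition (3) of meta-extendable is automatic for pairs $(y, z)$ where $z$ is the swap of $y$, again by the weight disparity.

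Second, I would search for $S_0$. A weight-$3$ vector in $\{0,1,2\}^{11}$ is specified by one of $\binom{11}{3} = 165$ supports together with one of $2^3 = 8$ sign patterns. Admissibility of $S_0$ forces distinct elements to have distinct supports, so the search reduces to choosing $37$ supports and a sign pattern on each. The remaining constraints --- the triples condition for admissibility of $S_0$, condition (2) of meta-extendable for each pair of chosen vectors $(x, y)$ and each $z \in S_1 \cup S_2$, condition (3) of meta-extendable for each chosen $x$ and each remaining $(y, z) \in S_1 \times S_2$, and the cardinality $|S_0| = 37$ --- can be encoded directly as a SAT/pseudo-Boolean instance.

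The main obstacle is the scale of the resulting clause set: condition (3) alone contributes on the order of $1320 \cdot 330^2 \approx 10^8$ clauses before simplification, so careful symmetry breaking (for instance, fixing one element of $S_0$ and exploiting the $S_{11}$ action permuting coordinates) is essential. After standard SAT preprocessing the instance falls within reach of the solver pipeline described in Section 5, and the explicit $S_0$ witnessing the lemma is posted on the author's webpage.
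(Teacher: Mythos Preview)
Your proposal is correct and follows essentially the same approach as the paper: take $S_1$ to be the $I(11,7)$ from Lemma~\ref{computer caps}, let $S_2$ be its image under the coordinate-wise swap $1\leftrightarrow 2$, and then locate $S_0$ by computer search. Your write-up is in fact more thorough than the paper's, which simply asserts that the swap preserves admissibility and that a suitable $S_0$ was found; your analysis of which subcases of Definitions~\ref{def:meta ext}(2)--(3) are forced by the weight disparity and which genuinely constrain the search is a useful addition.
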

\begin{proof}
We use $S_1 = I(11,7)$ from \eqref{computer caps}, and then we take $S_2$ as the set formed by swapping all 1s and 2s in $S_1$. Note that this is still an admissible set, since the doubles condition is unaffected by swapping non-zero coordinates, and if there was a coordinate $k$ where $\{x_k, y_k, z_k\} = \{0,0,1\}$, $\{0,0,2\}$ or $\{0,1,2\}$, then after swapping 1s and 2s we will have $\{x_k, y_k, z_k\} = \{0,0,1\}$, $\{0,0,2\}$ or $\{0,1,2\}$.
\smallbreak
Using a computer search, we can find $S_0 \sub I(11,3)$ such that $|S_0| = 37$ and $S_0, S_1, S_2$ satisfy the conditions for meta-extendable given in \eqref{def:meta ext}. The admissible set $S_0 \sub I(11,3)$ can be found on the author's webpage at \url{http://fredtyrrell.com/cap-sets}.
\end{proof}

\begin{lemma} 
There is an admissible set $T' \sub \{0, 1,2\}^{1562}$ such that $|T'| = 142 \cdot 37 \cdot \binom{11}{7}^{141}$ and all $t \in T'$ have weight 990.
\end{lemma}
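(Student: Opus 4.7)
The plan is to apply the meta-extended product lemma (immediately preceding this statement) to the meta-extendable collection $(S_0, S_1, S_2)$ from Lemma \ref{metacoll}, with meta-admissible set $T := \tilde{I}(142, 141)$. Since $(S_0, S_1, S_2)$ lives in $\{0,1,2\}^{11}$ and $T$ in $\{0,1,2\}^{142}$, the resulting admissible set lies in $\{0,1,2\}^{11 \cdot 142} = \{0,1,2\}^{1562}$, hitting the stated ambient dimension exactly.

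First I would produce the meta-admissible set. Lemma \ref{lemma:m-1} with $m = 142$ supplies a recursively admissible $\tilde{I}(142, 141)$ consisting of $\binom{142}{141} = 142$ vectors, each of weight $141$; being admissible, it is meta-admissible per Definition \ref{def:meta ad}. Applying the preceding lemma to this $T$ and the collection of Lemma \ref{metacoll} then yields an admissible set $T' := T(S_0, S_1, S_2) \subseteq \{0,1,2\}^{1562}$.

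Next I would count. Each $t \in T$ has a single zero coordinate and $141$ non-zero coordinates, so $t(S_0, S_1, S_2)$ is a direct product of one copy of $S_0$ with $141$ copies drawn from $\{S_1, S_2\}$. Since $|S_0| = 37$ and $|S_1| = |S_2| = \binom{11}{7}$, each such product has size $37 \cdot \binom{11}{7}^{141}$. The $142$ pieces indexed by distinct $t$'s are pairwise disjoint: two different $t$'s place the unique $S_0$-block at different positions among the $142$ blocks of length $11$, and the weight-$3$ elements of $S_0 \subseteq I(11,3)$ are distinguishable at that block from the weight-$7$ elements of $S_1 \cup S_2 \subseteq I(11,7)$. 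Summing gives $|T'| = 142 \cdot 37 \cdot \binom{11}{7}^{141}$. For the weight, every element of $T'$ is built from one weight-$3$ block and $141$ weight-$7$ blocks, so it has weight $3 + 141 \cdot 7 = 990$.

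I do not anticipate any real obstacle: the lemma is essentially bookkeeping once the meta-extended product machinery and Lemma \ref{metacoll} are in place. The only non-trivial observations are that the choice $m = 142$ is forced by $11m = 1562$ and $3 + 7(m-1) = 990$, and that disjointness of the $142$ pieces follows from the weight gap $3 < 7$ between $S_0$ and $S_1 \cup S_2$ guaranteed by condition (1) of meta-extendable.
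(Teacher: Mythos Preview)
Your proposal is correct and follows essentially the same route as the paper: take $T=\tilde{I}(142,141)$ from Lemma~\ref{lemma:m-1}, apply the meta-extended product lemma to the collection $(S_0,S_1,S_2)$ of Lemma~\ref{metacoll}, and read off the dimension, size and weight. Your explicit disjointness argument via the weight gap $3<7$ is a welcome addition that the paper leaves implicit.
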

\begin{proof}
Let $T = I(142,141)$, which exists by \eqref{lemma:recursive}. Using the meta-extendable collection $(S_0, S_1, S_2)$ from \eqref{metacoll} above, we can then produce a new admissible set $T'=T(S_0, S_1, S_2)$. Since $S_0, S_1, S_2 \sub \{0,1,2\}^{11}$, and $11 \times 142 = 1562$, we see that $T(S_0, S_1, S_2) \sub \{0,1,2\}^{1562}$. 
\smallbreak
Each element $T(S_0, S_1, S_2)$ contains 141 $S_1$ or $S_2$ blocks, and one $S_0$, so has weight $141 \times 7 + 3 = 990$. For each $t \in T$, the set $t(S_0, S_1, S_2)$ has $\binom{11}{7}^{141} \cdot 37$ different vectors, so $|T'| = 142 \cdot 37 \cdot \binom{11}{7}^{141}$.
\end{proof}

Using this new admissible set, we can finally prove the main result of this paper.
\begin{proof}[Proof of theorem \eqref{main}]
We use the admissible set $S=\tilde{I}(6,5)$, which exists by \eqref{lemma:recursive}, and $T' \sub \{0, 1,2\}^{1562}$ from the previous lemma. First, we apply the recursively admissible set $S$ to $(A_0, A_1, A_2)$, the 6 dimensional extendable collection of cap sets from \eqref{extendable caps}, to produce the extendable collection $(B, A_1^6, A_2^6)$, where $B$ has size $6 \cdot 112^5 \cdot 12$. Then we apply $T'$, and our final cap set has size
\[|T'| \cdot |B|^{1562-990} \cdot (112^6)^{990} = \binom{11}{7}^{141} \cdot 6^{572} \cdot 12^{572} \cdot 112^{8800} \cdot 37 \cdot 142.\]
\end{proof}
\begin{remark}
This cap set in $\F_3^{56232}$ has size $|A| \approx 2.21 \times 10^{19455} \approx 10^{10^{4.3}}$, and since $|A|^{1/56232} \approx 2.218021$, this example gives a slight improvement to the lower bound in \eqref{thm:396}.
Similar to the remark \eqref{otherbounds} at the end of section 2, it is possible to find other meta-extendable collections. For example, there is a collection $S_1 = I(11,6)$, $S_2$ is flipped $S_1$ and $S_0 \sub I(11,2)$ of size 20. However, none of these give a better bound than the above.
\end{remark}
\subsection{Summary}
Now we have proved all of the lower bounds in this paper, it seems a good idea to present the results we have achieved, alongside the previous lower bounds. Note that the bounds of Edel in \cite{Edel} and Calderbank and Fishburn in \cite{CalderbankFishburn} both come from what we are now calling the extended product construction.
\smallbreak
\begin{center}
\renewcommand{\arraystretch}{1.5}
\begin{tabular}{ |c|c|c|c|  }
\hline
\textbf{Bound} & \textbf{Construction} & \textbf{Dimension} & \textbf{Appears in} \\ [1ex] 
\hline\hline
2 & $\{0,1\}^n$ & All & Trivial \\[1ex] 
\hline
2.114742\ldots & Maximal cap of size 20 in $\mathbb{F}_3^4$ & 4 & \cite{Pellegrino} \\ [1ex] 
\hline
2.141127\ldots & Maximal cap of size 45 in $\mathbb{F}_3^5$ & 5 & \cite{10.1006/jcta.2002.3261} \\ [1ex] 
\hline
2.195514\ldots & Maximal cap of size 112 in $\mathbb{F}_3^6$ & 6 & \cite{dim6} \\ [1ex] 
\hline
2.210147\ldots & $\tilde{I}(25,24)$ and $I(90,89)$ & 13500 & \cite{CalderbankFishburn} \\ [1ex] 
\hline
2.217389\ldots & $\tilde{I}(8,7)$ and ${I}(10,5)$ & 480 & \cite{Edel} \\ [1ex]
\hline
\hline
2.2175608\ldots & $\tilde{I}(7,6)$ and ${I}(10,6)$ & 420 & \eqref{otherbounds} \\[1ex] 
\hline
2.217950\ldots & $\tilde{I}(7,6)$ and ${I}(11,6)$ & 462 & \eqref{otherbounds} \\[1ex] 
\hline
2.217981\ldots & $\tilde{I}(6,5)$ and ${I}(11,7)$ & 396 & \eqref{thm:396} \\[1ex] 
\hline
2.218021\ldots & Meta-extendable collection & 56232 & \eqref{main} \\[1ex] 
\hline
\end{tabular}
\end{center}

\section{Limits to the admissible set construction}
Given that our new lower bounds have come from finding new admissible sets, it is natural to ask what happens if we continue to find more admissible sets. In particular, we would like to know how much we could improve the lower bound by, if all admissible sets were to exist. Edel gave an answer to this question in the very final section of \cite{Edel}, which we present as the following proposition.
\begin{proposition} \label{prop:asymptotic limit}
 For a collection $(A_0, A_1, A_2)$ of extendable cap sets in $\mathbb{F}_3^{n}$, where $|A_1| = |A_2|$, the best admissible sets are those of the form $I(m,m \alpha)$ where $\alpha = \frac{|A_1|}{|A_0|+|A_1|}$ and $m$ is large. Using the extended product construction, the best constant we could achieve in our asymptotic lower bound is $c = \left(|A_0| + |A_1|\right)^{1/{n}}$.
\end{proposition}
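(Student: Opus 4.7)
My plan is to derive a LYM-type constraint on any admissible set $S \sub \{0,1,2\}^m$ and use it to bound $|S(A_0,A_1,A_2)|$ above by (essentially) $(|A_0|+|A_1|)^m$, then to match this with the $I(m,\lfloor\alpha m\rfloor)$ construction from \eqref{lemma:size}. The key structural input is condition~(1) of \eqref{def:admissible}: for distinct $s,s'\in S$, the coordinate witnessing $s_i=0\ne s'_i$ lies in $\mathrm{supp}(s')\setminus\mathrm{supp}(s)$, and the one witnessing $s_j\ne 0=s'_j$ lies in $\mathrm{supp}(s)\setminus\mathrm{supp}(s')$. Hence the map $s\mapsto \mathrm{supp}(s)$ is injective and its image is an antichain in the Boolean lattice $2^{[m]}$, so writing $w(s)=|\mathrm{supp}(s)|$, the LYM inequality gives
\[\sum_{s\in S}\binom{m}{w(s)}^{-1}\le 1.\]
Note that only condition~(1) is used here; the triples condition~(2) plays no role in the upper bound.

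Next I would read off $|S(A_0,A_1,A_2)|$ from \eqref{def:extended product}. Because $|A_1|=|A_2|$, each $s\in S$ contributes $|A_0|^{m-w(s)}|A_1|^{w(s)}$. Multiplying and dividing each term by $\binom{m}{w(s)}$ and applying the LYM bound above,
\[|S(A_0,A_1,A_2)|=\sum_{s\in S}\binom{m}{w(s)}^{-1}\cdot \binom{m}{w(s)}|A_0|^{m-w(s)}|A_1|^{w(s)}\le \max_{0\le w\le m}\binom{m}{w}|A_0|^{m-w}|A_1|^{w}.\]
By the binomial theorem this maximum is sandwiched between $(|A_0|+|A_1|)^m/(m+1)$ and $(|A_0|+|A_1|)^m$. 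Feeding the upper bound into \eqref{prop:lower bound asymptotic} shows that no admissible set $S$, applied to $(A_0,A_1,A_2)$ via the extended product construction, can drive the asymptotic rate past $(|A_0|+|A_1|)^{1/n}$.

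To locate the maximiser I compare consecutive binomial terms: the ratio $\binom{m}{w+1}|A_1|/[\binom{m}{w}|A_0|]=(m-w)|A_1|/[(w+1)|A_0|]$ crosses $1$ at $w=\alpha m$ with $\alpha=|A_1|/(|A_0|+|A_1|)$, so concentrating all of $S$'s supports at weight $\lfloor \alpha m\rfloor$, i.e.\ taking $S=I(m,\lfloor \alpha m\rfloor)$, is asymptotically optimal. Assuming such admissible sets exist for arbitrarily large $m$, \eqref{lemma:size} produces cap sets in $\F_3^{nm}$ of size $\binom{m}{\lfloor\alpha m\rfloor}|A_0|^{m-\lfloor\alpha m\rfloor}|A_1|^{\lfloor\alpha m\rfloor}$, and a routine Stirling estimate shows the $(nm)$-th root of this quantity tends to $(|A_0|+|A_1|)^{1/n}$, matching the upper bound. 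The main subtlety is that the proposition is partly conditional: the LYM upper bound is unconditional, but the matching construction depends on the (open) existence of $I(m,\lfloor\alpha m\rfloor)$ for arbitrarily large $m$, so the conclusion is most honestly phrased --- following Edel --- as the best rate one \emph{could} hope to achieve from the extended product construction on $(A_0,A_1,A_2)$.
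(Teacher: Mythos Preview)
Your argument is correct, and in fact strictly stronger than the paper's. The paper's proof simply restricts attention to constant-weight admissible sets $I(m,m\alpha)$ from the outset, writes down the resulting cap-set size $\binom{m}{m\alpha}|A_1|^{m\alpha}|A_0|^{m(1-\alpha)}$, replaces $\binom{m}{m\alpha}$ by its entropy asymptotic $2^{mh(\alpha)}$, and then does a one-variable calculus optimisation in $\alpha$ to locate the maximiser at $\alpha=|A_1|/(|A_0|+|A_1|)$. It never addresses whether a non-constant-weight admissible set might do better.

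Your LYM step closes exactly that gap: observing that condition~(1) of admissibility forces the support map $s\mapsto\mathrm{supp}(s)$ to be injective with antichain image gives the unconditional bound $|S(A_0,A_1,A_2)|\le\max_w\binom{m}{w}|A_0|^{m-w}|A_1|^{w}$ for \emph{every} admissible $S$, not just the constant-weight ones. The ratio test you use to locate the maximising $w$ is equivalent to the paper's derivative computation but avoids the entropy detour. Both proofs then arrive at the same (conditional) matching construction, and you are right to flag that the attainability half rests on the conjectured existence of $I(m,\lfloor\alpha m\rfloor)$ for large $m$, as the paper makes explicit immediately after the proposition.
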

\begin{proof}
Let $\alpha \in (0,1)$. If we apply the admissible set $I(m,m \alpha)$ to $(A_0, A_1, A_2)$, we have a cap set in $\mathbb{F}_3^{nm}$ of size 
\[\binom{m}{m \alpha} \cdot |A_1|^{m \alpha} \cdot |A_0|^{m(1-\alpha)}.\]

For large $m$, we can use the well known asymptotic estimate $\binom{m}{m\alpha} \sim 2^{m h(\alpha)}$, where \[h(x) = -x \log_2(x) - (1-x)\log_2(1-x)\] is the binary entropy function. Taking logs, applying a change of base and removing constants, we see that we want to maximise the function
\[f(x) = x \log\bra{\frac{|A_1|}{|A_0|}} - x\log(x) - (1-x)\log(1-x).\] 
The derivative is given by $f'(x) = \log\bra{\frac{|A_1|}{|A_0|}} + \log(1-x) - \log(x)$, which has its root at $x = \frac{|A_1|}{|A_0|+|A_1|}$. We can then substitute \[w=m \alpha = \frac{m|A_1|}{|A_0|+|A_1|}\] back into our formula from \eqref{lemma:size} for the size of the cap set, giving a cap set in $\mathbb{F}_3^{nm}$ of size $\left(|A_0|+|A_1|\right)^m$. The result follows by taking $nm$-th roots.
\end{proof}
\begin{remark}
For the collection of cap sets in $\mathbb{F}_3^6$ from \eqref{extendable caps}, the best admissible sets are those of the form $I\left(m, \frac{28m}{31}\right)$ for large $m$, and the best asymptotic lower bound we could get using these 6 dimensional cap sets is $\left(124^{1/6}\right)^n = (2.233\ldots)^n$.
\end{remark}
Our experimental evidence, combined with some heuristic arguments (and perhaps a little wishful thinking), lead us to explicitly state the following conjecture, which was implied at the end of \cite{Edel}.
\begin{conjecture} \label{conj1}
For any $m>w>0$, there always exists an $I(m,w)$ admissible set. Therefore, the maximum size of a cap set in $\mathbb{F}_3^n$ is at least $124^{n/6} \approx 2.233^n$.
\end{conjecture}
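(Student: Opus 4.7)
Since the final statement is phrased as a conjecture rather than a theorem, I will describe the natural approach and highlight where the difficulty lies. The conditional implication is easy: granting the existence of $I(m,w)$ for every $m > w > 0$, I would apply Proposition \eqref{prop:asymptotic limit} to the extendable collection $(A_0, A_1, A_2) \subseteq \mathbb{F}_3^6$ from Lemma \eqref{extendable caps}, where $|A_0| = 12$ and $|A_1| = |A_2| = 112$. The optimal ratio is $\alpha = 112/124 = 28/31$, so for each large $m$ I would pick $w$ with $w/m$ tending to $28/31$ and extend by $I(m,w)$, producing cap sets in $\mathbb{F}_3^{6m}$ of size approaching $124^m$. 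Combined with Proposition \eqref{prop:lower bound asymptotic}, this gives the asymptotic lower bound $124^{n/6} \approx 2.233^n$.

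The real content is therefore the existence of $I(m,w)$ itself. Since the pairwise condition of \eqref{def:admissible} is automatic as soon as the supports are distinct $w$-subsets of $[m]$, the problem reduces to assigning a colour $c_A(i) \in \{1,2\}$ to each element $i$ of each $w$-subset $A$ so that the triples condition holds. A short case analysis shows that the triples condition can only fail on triples $(A, B, C)$ in which every coordinate lies in either none, exactly two, or all three of the sets; together with $|A| = |B| = |C| = w$, this forces $|A \cap B \setminus C| = |A \cap C \setminus B| = |B \cap C \setminus A| = (w - k)/2$, where $k = |A \cap B \cap C|$ and $w - k$ is even. On such a bad triple, a uniformly random $\{1,2\}$-colouring fails with probability exactly $(1/2)^{3(w-k)/2}$, since failure means monochromaticity at every pair-coordinate across the two relevant vectors.

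My plan from here would be to invoke the Lov\'asz Local Lemma, since a plain union bound already fails: the expected number of bad events is dominated by the $k = w-2$ term, which contributes of order $m^{w+1}$ triples each failing with probability $1/8$. Two bad events are independent unless their triples share a common support with overlapping pair-coordinates, so the dependency degree of each event is polynomial, which is at least consistent in shape with LLL. The main obstacle, and I suspect the reason this conjecture has resisted proof, is that the degree-times-probability inequality would have to hold uniformly across every level $k \in \{w-2, w-4, \ldots\}$ and every parameter regime $w/m$; for $k$ close to $w$ the per-event failure probability is only a constant while the dependency degree grows with $m$, and the required bound does not appear to close. As a parallel strategy I would look for explicit algebraic or design-theoretic constructions generalising Edel's argument for $\tilde{I}(m, m-1)$ in Lemma \eqref{lemma:m-1}, and try to build new $I(m,w)$ inductively via the meta-extendable framework of Section 3; the genuinely hard cases, however, are precisely those with $w/m$ bounded away from $0$ and $1$, such as the target ratio $28/31$, where neither the sparse nor the dense end of the parameter space offers an obvious handle.
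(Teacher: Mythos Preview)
Your reading is correct: the paper offers no proof of this statement --- it is explicitly labelled a conjecture, motivated only by ``experimental evidence, combined with some heuristic arguments (and perhaps a little wishful thinking).'' The conditional implication you spell out (apply Proposition~\ref{prop:asymptotic limit} to the $\mathbb{F}_3^6$ collection of Lemma~\ref{extendable caps} with $\alpha=28/31$, then invoke Proposition~\ref{prop:lower bound asymptotic}) is exactly what the paper does in the Remark immediately following Proposition~\ref{prop:asymptotic limit}, so on that half you match the paper precisely.

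Where you diverge is in the discussion of how one might attack the existence of $I(m,w)$. The paper does not attempt a probabilistic argument at all; its only comment is the Remark after the conjecture, recording that the known cases are $I(m,0)$, $I(m,1)$, $I(m,2)$, $I(m,3)$, $I(m,m-1)$, $I(m,m)$, together with everything in dimension $m\le 11$. Your Lov\'asz Local Lemma sketch is a genuine addition beyond the paper, and your diagnosis of why it stalls --- constant failure probability on the $k=w-2$ layer against polynomially growing dependency degree --- is accurate and is a reasonable explanation for why the conjecture remains open. Just be aware that none of this is in the paper, so if you present it you are offering original heuristics rather than reconstructing the authors' argument.
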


\begin{remark}
In addition to all admissible sets in dimensions up to 11, we can prove the existence of admissible sets of weight 2 and 3 for all $m$, via a similar construction to \eqref{lemma:recursive}. These, combined with the admissible sets $I(m,0)$, $I(m,1)$, $I(m,m-1)$ and $I(m,m)$, are all of the admissible sets currently known to exist.
\end{remark}
The following table shows the lowest dimension admissible set which would be needed to achieve new bounds, using the extended product construction in \eqref{def:extended product} with the cap sets in $\F_3^6$ from \eqref{extendable caps}.
\smallbreak
\begin{center}
\renewcommand{\arraystretch}{1.4}
\begin{tabular}{ |c|c|c|c|  }
\hline
\textbf{Bound} & \textbf{Admissible Sets} & \textbf{Dimension} \\[1ex] 
\hline\hline
2.220\ldots &  $\tilde{I}( 5 , 4 )$ and $I( 17 , 11 )$  &  510   \\[1ex]
\hline
2.225\ldots &  $\tilde{I}( 3 , 2 )$ and $I( 54 , 41 )$  &  972   \\[1ex]
\hline
2.230\ldots & $I( 311 , 281 )$  &  1866   \\[1ex]
\hline
2.233\ldots & $I( 22948 , 20727 )$  &  137688   \\[1ex]
\hline
2.233076\ldots & $I\left(m, \frac{28m}{31}\right)$ for large $m$ & 6$m$  \\[1ex] 
\hline

\end{tabular}
\end{center}
\bigbreak
While small improvements to our bound may be possible by constructing better admissible sets, we do not expect that a significant increase in the lower bound is possible by simply finding more admissible sets through a computer search. It is perhaps not a huge surprise then that although Edel was able to find an $I(10,5)$ about 20 years ago, the best we have been able to do is an $I(11,7)$. 

\section{The SAT Solver}
\begin{definition}[Boolean satisfiability]
The Boolean satisfiability problem asks whether, given a propositional formula, there exists an assignment of true or false to each variable in the formula such that the formula is true. If this is the case, we say that the formula is \emph{satisfiable}.
\end{definition}

Significant research has gone into finding efficient algorithms for the Boolean satisfiability problem, known as SAT solvers. We used the kissat SAT solver, the winner of the $2020$ SAT competition \cite{SAT}. Most SAT solvers, including kissat, take inputs in a format called conjuctive normal form (CNF).
\begin{definition}[Conjunctive normal form]
A propositional formula is in \emph{conjuctive normal form} if the formula consists of a conjunction of clauses, where each clause is a disjunction of propositional variables or their negations.
\end{definition}

Our use of the SAT solver requires three steps. First, we convert the problem of a particular admissible set existing into a statement in CNF. Once we have defined the problem in CNF, we can use a SAT solver program to check whether this formula is satisfiable. If our formula is satisfiable, the SAT solver returns the assignment of the variables which satisfy the formula, which we convert back into a set of vectors, producing our admissible set. The first step is the interesting one from a mathematical perspective, which we will discuss in this section.

\subsection{CNF algorithm}
\subsubsection{Variables}
We begin by generating the $\binom{m}{w}$ different support sets. In other words, for each $1 \leq i \leq \binom{m}{w}$ we have a different $S_i \sub \{1, \ldots, m\}$, where $|S_i| = w$. We then define a variable for each non-zero coordinate in each support set - let $S_i^k$ correspond to the element $k$ in $S_i$, meaning the $k$ coordinate of the $i$th support set. There are $w \cdot \binom{m}{w}$ such variables $S_i^k$.
\smallbreak
For any pair of support sets, we want to record when a given coordinate is different. So, for each coordinate in the first support set, we define a variable which is true if this coordinate is the same in the second support set as well, and false if it is not. We can represent this variable as $S_{i,j}^k$, corresponding to the pair $S_i, S_j$ of support sets, and the coordinate $k$. This variable $S_{i,j}^k$ is true if coordinate $k$ is in both support sets and $S_i^k = S_j^k$, and is false if coordinate $k$ is only in $S_i$ or $S_i^k \neq S_j^k$.

\subsubsection{Reconciling the pair and triple variables}
We now add constraints, starting with constraints which combine the single variables and the pairwise variables. This is basically just a common sense constraint, to make sure the variables looking at individual coordinates and the variables looking at pairs of coordinates are compatible.

\smallbreak
For any pair $S_i$ and $S_j$ of different support sets, for every coordinate $k$ in both $S_i$ and $S_j$ we ask that either $S_{i,j}^k$ is true or $\{S_i^k, S_j^k\} = \{1,2\}$. An equivalent way to phrase this is as follows: take any pair $x,y$ with different support sets. For each coordinate $k$ where both $x_k$ and $y_k$ are non-zero we add the constraints $(x_k=y_k) \lor (x_k=1) \lor (y_k=1)$ and $(x_k=y_k) \lor (x_k=2) \lor (y_k=2)$. 
\smallbreak
In other words, if the variable which records when $x_k \neq y_k$ and $x_k$, $y_k >0$ is true, then exactly one of the 2 variables which record whether $x_k$ or $y_k$ are 1 is true. This is essentially just saying that $x_k \neq 0 \neq y_k$ and $x_k \neq y_k$ implies $\{x_k, y_k\} = \{1,2\}$.
 
\subsubsection{Checking the condition on triples} We now ensure every triple $x,y,z$ in our set has a coordinate $k$ such that $\{x_k,y_k,z_k\} = \{0,0,1\}$, $\{0,0,2\}$ or $\{0,1,2\}$. This is the triples condition for admissible sets from \eqref{def:admissible}.
\smallbreak
Take any 3 distinct support sets, and check all of the coordinates from $1$ to $m$. If a coordinate is in exactly one of the 3 support sets, we are done and don't need to worry about this triple of support sets - we will be automatically guaranteed a coordinate where our triple is $\{0,0,1\}$ or $\{0,0,2\}$.
\bigbreak
The problem we need to consider is when there is no coordinate in exactly one of the 3 supports. That is, every coordinate in one of the three supports is in at least one of the others. In this case, we need to look at the coordinates in exactly two of the supports, and force one of these coordinates to give us $\{0,1,2\}$. If $k$ is in the support of $x,y$, but not $z$, we can add the condition $x_k \neq y_k$, which would mean $\{x_k, y_k, z_k\} = \{0,1,2\}$.
\smallbreak
We do this for every coordinate in exactly two of the support sets of $x,y,z$, and we then take the disjunction of these conditions, meaning we require this to be true for only one coordinate. This produces a constraint which asks for $\{x_k,y_k,z_k\} = \{0,1,2\}$ in at least one coordinate $k$. So, if this is satisfied for all triples without a coordinate in exactly one of the three supports, we are done, and have an admissible set.

\subsection{Improving the SAT solver} In order for the SAT solver to return an output in a reasonable time, we add more constraints to the problem, reducing the search space of all potential assignments and hence hopefully allowing the SAT solver to work faster. We need to be clever with our choice of extra conditions, to make the algorithm more efficient without turning the problem into one which is impossible. By studying smaller examples of admissible sets, heuristic arguments and a healthy dose of educated guesswork, we tried various combinations of further constraints on our problem. Some made the problem unsatsfiable, and some still did not allow the SAT solver to finish within a reasonable time. However, we used this information to refine our method, and eventually we were successful in producing three new admissible sets. The following additional constraints allowed us to find the admissible sets $I(11,7)$, $I(11,6)$ and $I(10,6)$.

\subsubsection{Extra constraints for $I(11,7)$}
\begin{enumerate}[label=(\roman*)]
    \item Every vector must have the first two non-zero entries different - that is, the first 2 non-zero coordinates are always $(1,2)$ or $(2,1)$.
    \item Every vector must have at least one of each digit 1 and 2 in their 4th, 5th or 6th coordinates - they cannot all be 1 or all be 2.
    \item If the third non-zero entry is in coordinates 1 to 7, it is always a 1.
    \item If the fourth non-zero entry is in coordinates 1 to 7, it is always a 2.
\end{enumerate}
\subsubsection{Extra constraints for $I(11,6)$}
\begin{enumerate}[label=(\roman*)]
    \item Every vector must have the first two non-zero entries different - that is, the first two non-zero coordinates are always $(1,2)$ or $(2,1)$.
    \item Every vector must have at least one of each digit 1 and 2 in the final three non-zero coordinates - they cannot all be 1 or 2.
    \item If the third non-zero entry is in coordinates 1 to 7, it is always a 1.
\end{enumerate}

\subsubsection{Extra constraints for $I(10,6)$}
\begin{enumerate}[label=(\roman*)]
    \item If the second non-zero entry is in the first six coordinates, make it a 1.
    \item If the third non-zero entry is in the first six coordinates, make it a 2.
    \item No vector ends in $(2,2)$.
\end{enumerate}

\begin{remark}
All of the code used in this paper, including the CNF generation code and the code to transform the SAT output back to vectors, can be found on the author's website, at \url{http://fredtyrrell.com/cap-sets} and also at \url{https://github.com/OchenCunningBaldrick/Cap-Sets}. We would like to make it clear that our use of the SAT solver is unlikely to be the most efficient way to produce admissible sets, and we would be very interested in suggestions to improve this aspect of our construction, from those with more experience and knowledge of SAT solvers and other computational methods.
\end{remark}

%%% AUTHOR: optional acknowledgments here
\section*{Acknowledgments} %%  you may comment this out if no Ackno
I am extremely grateful to Thomas Bloom for suggesting the problem and supervising my research. I would like to thank Thomas for his support, encouragement and advice throughout my project. In addition, I thank him for many helpful discussions and useful suggestions in preparing this paper.
\smallbreak
Thanks also go to Akshat, Albert, Ittihad, Maria, and Yifan for being an excellent audience for my presentations in the Maths Institute, where they provided me with a valuable opportunity to share and discuss my research.
\smallbreak
Finally, I would like to thank Alex, Anubhab, Chris, Claire, Flora, Jess and Zach for allowing me to try and explain my work to them, with varying levels of success.
\smallbreak
The work in this paper was completed while the author was employed as a Summer Project Intern in the Mathematical Institute, University of Oxford, under the supervision of Dr Thomas Bloom.

%%% AUTHOR:
%%% Bibliography goes here. Note that the arXiv cannot process bibtex
%%% or biber bibliographies.  Example of acceptable bibliograpy format:
\bibliographystyle{amsplain}
\bibliography{caps}
%% AUTHOR: You can generate such a bibliography from a .bib file by 
%% running pdflatex/bibtex/pdflatex/pdflatex and then pasting the .bbl file
%% between \begin{thebibliography} and \end{bibliography}

%%% AUTHOR: Include a short description of each author following the
%%% structure below. Use the same short tags used previously.  
%%% Use \imageat{} and \imagedot{} instead of "@" and "." in
%%% email addresses-this replaces the symbols with graphics to avoid 
%%% e-mail address harvesting from the .pdf file
\begin{dajauthors}
\begin{authorinfo}[Fred]
  Fred Tyrrell\\
  Fry Building, School of Mathematics\\
  Bristol, UK\\
  fred\imagedot{}tyrrell\imageat{}bristol\imagedot{}ac\imagedot{}uk\\
  \url{http://fredtyrrell.com}
\end{authorinfo}
\end{dajauthors}

\end{document}